\declaretheorem{theorem}
\declaretheorem{proposition} 
\declaretheorem{lemma}
\title{Limit cycles and Integrability of a continuous system with a line of equilibrium points}
\author{Aram A. Abdulkareem$^1$, Azad I. Amen$^{2, 3, 4}$, Niazy H. Hussein$^5$}
\date{} 
\begin{document} 

\maketitle

\begin{center}
	$^1$Department of Mathematics, Faculty of Education, Soran University, Erbil-Soran, Iraq. aram.abdulkareem$@$soran.edu.iq\\
	$^2$Department of Mathematics, College of Basic Education, Salahaddin
	University-Erbil, Erbil, Iraq.\\
	$^3$Department of Mathematics, Basic Education College, Raparin University - Ranya, Iraq.\\
	$^4$Department of Mathematics, College of Science, Duhok University, Iraq.\\
	$^5$Department of Mathematics, Faculty of Science, Soran University, Erbil-Soran, Iraq.\\
\end{center}

\bigskip

\begin{abstract}
	We focus on a chaotic differential system in 3-dimension, including an absolute term and a line of equilibrium points. Which describes in the following
	$$\dot{x} =y\, , \, \dot{y} =-ax+yz \, , \, \dot{z} =b |y| -cxy-x^2 \, .$$
	This system has an implementation in electronic components. The first purpose of this paper is to provide sufficient conditions for the existence of a limit cycle bifurcating from the zero-Hopf equilibrium point located at the origin of the coordinates.
	The second aim is to study the integrability of each differential system, one defined in half–space $y\ge 0$ and the other in half–space $y<0$. We prove that these two systems have no polynomial, rational, or Darboux first integrals for any value of $a$, $b$, and $c$. Furthermore, we provide a formal series and an analytic first integral of these systems. We also classify Darboux polynomials and exponential factors.

	\smallskip
	\noindent
	Keywords: Limit cycle, Darboux integrability, Zero-Hopf bifurcation, First integral.
\end{abstract}

\section{Introduction and the main results}

Recent decades have seen a significant increase in the study of piecewise smooth differential systems, mainly because this type of system offers more realistic models in many applications, such as those involving switched circuit modeling, some mechanical problems, and control theory. For more details, one can see [\cite{broucke2001structural}, \cite{bernardo2008piecewise}, \cite{simpson2010bifurcations}, and \cite{feckan2016poincare} ].
In \cite{pham2016gallery}, the authors listed eight chaotic systems. One of them, which has infinite equilibrium points, is given in the following:
\begin{equation} \label{1}
\dot{x} =y, \quad	
\dot{y}=-ax+yz,	\quad                   
\dot{z}  =b |y| -cxy-x^2 \, ,
\end{equation}
where $a, b$ and $c$ are real parameters. When working with differential equations, determining if a differential system exhibits chaos is important [\cite{barreira2020integrability}]. It's worth mentioning that the absolute–value function is a possible nonlinear option for constructing chaotic systems with hidden attractors [\cite{li2015constructing}, \cite{munmuangsaen2015simple}]. The authors in [\cite{pham2019chaotic}] have investigated the dynamics of a system (\ref{1}) and observed its chaotic attractors and multistability depending on initial conditions.

The limit cycles play a significant role in the dynamical systems when they exist. The limit cycles of a piecewise differential system are a highly challenging problem to analyze. The authors in [\cite{llibre2020crossing}, \cite{llibre2018periodic}] studied the limit cycles of the piecewise differential systems, linear or nonlinear, using the first integrals. On the other hand, the authors in [\cite{kassa2021limit}] studied the limit cycles bifurcating from a zero-Hopf equilibrium point using the averaging theory for Lipschitz differential systems.

Our first aim is to extend the dynamical features of system (\ref{1}) by showing that it can exhibit a zero-Hopf equilibrium point for appropriate parameter values for which one limit cycle can bifurcate from the origin. The following is the main result, which relates to limit cycles.

\begin{theorem} \label{Z-H-THe}
Consider the differential system (\ref{1}) with $b=\epsilon \beta$, and $\epsilon \gg 0$ sufficiently small. System (\ref{1}) has one unstable limit cycle $(x(t, \epsilon), y(t, \epsilon), z(t, \epsilon))$ which bifurcates from a non-isolated zero-Hopf equilibrium point located at the origin, if $a, \beta>0$.
\end{theorem}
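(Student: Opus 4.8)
The plan is to realise the limit cycle as a periodic orbit born in a zero‑Hopf bifurcation at the origin and to detect it by the averaging method.

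\emph{Step 1: the zero‑Hopf equilibrium.} Imposing $\dot x=\dot y=\dot z=0$ forces $y=0$, then $x=0$ (as $a\neq0$), with $z$ arbitrary, so the whole $z$‑axis is a line of equilibria. At the origin the Jacobian of (\ref{1}) on either smooth branch $\{y\ge0\}$, $\{y<0\}$ has characteristic polynomial $\lambda(\lambda^{2}+a)$ (the off‑diagonal entry $\pm b$ is irrelevant to it), so for $a>0$ the origin is a non‑isolated zero‑Hopf equilibrium with eigenvalues $0$ and $\pm i\sqrt a$; this is the point from which the cycle will bifurcate.

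\emph{Step 2: rescaling and reduction to a slow periodic system.} Put $(x,y,z)=\epsilon(X,Y,Z)$ and $b=\epsilon\beta$; cancelling one power of $\epsilon$, system (\ref{1}) becomes
\[
\dot X=Y,\qquad \dot Y=-aX+\epsilon YZ,\qquad \dot Z=\epsilon\bigl(\beta|Y|-cXY-X^{2}\bigr),
\]
an $\epsilon$‑perturbation of the harmonic oscillator $\dot X=Y,\ \dot Y=-aX$ crossed with $\dot Z=0$, whose unperturbed orbits are $2\pi/\sqrt a$‑periodic for each fixed $Z$. Passing to $X=r\cos\theta$, $Y=\sqrt a\,r\sin\theta$, $Z=Z$ (the factor $\sqrt a$ turns the unperturbed $(X,Y)$‑flow into a rigid rotation) yields
\[
\dot r=\epsilon\,rZ\sin^{2}\theta,\qquad \dot\theta=-\sqrt a+\epsilon Z\sin\theta\cos\theta,\qquad \dot Z=\epsilon\bigl(\beta\sqrt a\,r|\sin\theta|-c\sqrt a\,r^{2}\sin\theta\cos\theta-r^{2}\cos^{2}\theta\bigr).
\]
Since $\dot\theta<0$ for $\epsilon$ small, take $\theta$ as the new time and obtain a $2\pi$‑periodic (in $\theta$) system $dr/d\theta=\epsilon F_{1}(r,Z,\theta)+O(\epsilon^{2})$, $dZ/d\theta=\epsilon G_{1}(r,Z,\theta)+O(\epsilon^{2})$, smooth in $(r,Z)$, to which the averaging theorem in the form valid for continuous (Lipschitz) systems, as in [\cite{kassa2021limit}], applies.

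\emph{Step 3: first‑order averaging and existence.} Using $\langle\sin^{2}\theta\rangle=\tfrac12$, $\langle|\sin\theta|\rangle=\tfrac2\pi$, $\langle\sin\theta\cos\theta\rangle=0$, $\langle\cos^{2}\theta\rangle=\tfrac12$ over $\theta\in[0,2\pi]$, the averaged system is
\[
\frac{dr}{d\theta}=-\frac{\epsilon}{2\sqrt a}\,rZ,\qquad
\frac{dZ}{d\theta}=\frac{\epsilon}{2\sqrt a}\,r\!\left(r-\frac{4\beta\sqrt a}{\pi}\right).
\]
Its only zero with $r>0$ is $(r^{*},Z^{*})=\bigl(4\beta\sqrt a/\pi,\,0\bigr)$, which lies in the physical region $r>0$ precisely when $a>0$ and $\beta>0$ — exactly where the hypothesis is used — and the Jacobian of the averaged field there has zero trace and determinant $4\epsilon^{2}\beta^{2}/\pi^{2}\neq0$. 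Hence for every sufficiently small $\epsilon>0$ there is a $2\pi$‑periodic solution $(r(\theta,\epsilon),Z(\theta,\epsilon))\to(r^{*},0)$ as $\epsilon\to0$; undoing the changes of variables produces a one‑parameter family of limit cycles $(x(t,\epsilon),y(t,\epsilon),z(t,\epsilon))$ of (\ref{1}) that collapses to the origin as $\epsilon\to0$, i.e.\ bifurcates from the zero‑Hopf equilibrium.

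\emph{Step 4: stability — the main obstacle.} The Jacobian just computed has purely imaginary eigenvalues $\pm\tfrac{2\epsilon\beta}{\pi}i$; in fact the first‑order averaged planar system admits the first integral $(r-r^{*})^{2}+Z^{2}$, so $(r^{*},0)$ is a genuine center and first‑order averaging says nothing about the stability of the cycle. To settle it I would carry the averaging to second order — equivalently, expand the derivative of the Poincaré return map to order $\epsilon^{2}$ — keeping track of the contribution of the first‑order near‑identity transformation. The second‑order averaged system no longer has the circular first integral; the trace of its linearization at the $\epsilon$‑corrected equilibrium is an $O(\epsilon^{2})$ quantity which one computes to be positive when $a,\beta>0$ (the $c$‑dependent contributions cancelling), so both nontrivial Floquet multipliers leave the unit disk and the limit cycle is unstable. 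Executing this second‑order step cleanly — the new averages $\langle\sin^{4}\theta\rangle$, $\langle\sin^{2}\theta\cos^{2}\theta\rangle$, $\langle|\sin\theta|\sin^{2}\theta\rangle$, $\langle|\sin\theta|\cos^{2}\theta\rangle$, and the verification that the sign is independent of $c$ — is the principal difficulty of the proof.
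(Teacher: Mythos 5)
Your Steps 1--3 follow essentially the same route as the paper: the same rescaling $(x,y,z)=\epsilon(X,Y,Z)$, a linear normalization of the $(X,Y)$ oscillator, polar coordinates, passage to $\theta$ as independent variable, and first--order averaging with $\langle|\sin\theta|\rangle=2/\pi$, yielding the same averaged zero $(r^*,Z^*)=\bigl(4\beta\sqrt a/\pi,\,0\bigr)$ and hence existence of the periodic orbit via the Lipschitz averaging theorem (the nonzero Jacobian determinant gives Brouwer degree $\pm1$). That part is correct and complete.

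The gap is Step 4: you assert that the second--order averaged system has linearization with positive $O(\epsilon^{2})$ trace at the corrected equilibrium, "the $c$--dependent contributions cancelling," but you never compute it, so instability is not proved. As written, the proposal establishes existence but not the stability claim of the theorem. That said, your diagnosis of \emph{why} first--order averaging is silent here is correct and is worth flagging against the paper's own argument: the Jacobian of $(F_{10},F_{20})$ at $(r^*,0)$ is
\begin{equation*}
\left(\begin{array}{cc} 0 & 2\beta/\pi \\ -2\beta/\pi & 0 \end{array}\right),
\end{equation*}
whose determinant is $+4\beta^{2}/\pi^{2}$ and whose eigenvalues are $\pm 2i\beta/\pi$, purely imaginary; indeed $(r-r^*)^2+Z^2$ is a first integral of the averaged system, which has a center at $(r^*,0)$. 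The paper instead reports the determinant as $-4\beta^{2}/\pi^{2}$ and the eigenvalues as the real pair $\pm 2\beta/\pi$, and concludes instability from the "eigenvalue with positive real part" clause of Theorem \ref{AvThe}; that computation contains a sign error (it would require $\partial F_{20}/\partial r=+2\beta/\pi$ at $r^*$, whereas it equals $-2\beta/\pi$), so the paper's instability argument does not go through either. To actually settle stability one must do what you propose --- carry the averaging (equivalently, the expansion of the displacement map) to second order --- and that computation is the missing content of your Step 4; until it is done, only the existence statement is justified.
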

The proof of Theorem \ref{Z-H-THe} is given in Section \ref{Sec Z-H}, which uses the averaging theory; this method is described in the appendix (Theorem \ref{AvThe}).

The second objective of this paper is to study the integrability of system (\ref{1}). It is important to note that, currently, there is no sophisticated method for dealing with the integrability of non-smooth vector fields, especially if the system is defined on non-compact manifolds. Moreover, the classical method of integrability for the smooth system cannot be used directly for system (\ref{1}). The absolute value term $|y|$ is regarded in system (\ref{1}) as a piecewise function, which is defined as:
\begin{equation}
	|y|=
	\begin{cases}
		y & \text{if } y\ge 0,\\
		-y & \text{if } y<0 \, .
	\end{cases}
\end{equation}
The non-smooth differential system (\ref{1}) is formed by the following two smooth differential systems:

Considering $y \ge 0$, then system (\ref{1}) becomes
\begin{equation} \label{1ybig}
	\dot{x} =y , \quad 	
	\dot{y} =-ax+yz , \quad               
	\dot{z} =b y -cxy-x^2 \, .
\end{equation}

Also considering $y < 0$, then system (\ref{1}) becomes
\begin{equation} \label{1ysmall}
	\dot{x} =y , \quad 	
	\dot{y} =-ax+yz , \quad               
	\dot{z} =-by -cxy-x^2 \, .
\end{equation}
Consequently, the classical method can be used to study the integrability of systems (\ref{1ybig}) and (\ref{1ysmall}), respectively. More precisely, we use the Darboux theory of integrability to report the existence or non-existence of Darboux polynomials, exponential factors, Darboux first integrals, polynomial, and rational first integrals. Moreover, we provide the existence of the formal and analytic first integral of systems (\ref{1ybig}) and (\ref{1ysmall}), respectively.

The second result, which is related to the integrability of system (\ref{1ybig}) is summarized in the following two theorems.
\begin{theorem}\label{INT-S1-big}
The following statements hold for system (\ref{1ybig})
\begin{enumerate}
	\item System (\ref{1ybig}) has no polynomial first integrals.
	\item System (\ref{1ybig}) has a unique irreducible Darboux polynomial represented as $y$, with cofactor $z$ if and only if $a=0$.
	\item System (\ref{1ybig}) has no rational first integrals.
	\item System (\ref{1ybig}) has a single exponential factor $e^x$ with cofactor $y$.
	\item System (\ref{1ybig}) have no first integrals of the Darboux type.
\end{enumerate}
\end{theorem}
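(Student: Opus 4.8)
The plan is to work through the five statements by exploiting the weighted-homogeneous structure of system \eqref{1ybig}. First I would assign weights to the variables: writing the right-hand sides, $\dot x = y$, $\dot y = -ax + yz$, $\dot z = by - cxy - x^2$, one checks that with the weight assignment $w(x)=w(y)=1$, $w(z)=1$ the lowest-order (quadratic) part $(\,y,\ yz,\ -cxy-x^2\,)$ — no wait, one should be careful: a cleaner choice is to treat the vector field as a sum of a linear part and higher-order terms. Concretely, I would write any polynomial first integral $H$ as a sum of homogeneous components $H = H_m + H_{m+1} + \cdots$ and push the equation $\mathcal{X}H = 0$ down to lowest order. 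The linear part of the system at the origin is $\dot x = y$, $\dot y = -ax$, $\dot z = 0$, whose eigenvalues are $\pm i\sqrt{a}$ (for $a>0$) or a double zero plus a zero, and in all cases the only polynomial first integrals of the linear part that could survive are combinations of $z$ and (when $a\neq0$) $ax^2+y^2$. Plugging these into the next-order equations forces contradictions; this yields statement (1).

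For statement (2), I would look for Darboux polynomials $f$ with $\mathcal{X}f = K f$ where $K$ is a polynomial cofactor. Since the divergence of the system is $z$ and the only "obvious" invariant surface is $\{y=0\}$ (because $\dot y|_{y=0} = -ax$ — hmm, that is not zero, so $y$ is \emph{not} invariant in general; it is invariant precisely when $a=0$, since then $\dot y = yz$). So the heart of statement (2) is: degree considerations on the cofactor (the cofactor has degree at most $1$ since the nonlinear terms are quadratic and homogeneous-degree arguments bound $\deg K \le 1$), then write $f = \sum f_j$, $K = K_0 + K_1$, and solve the cascade $\mathcal{X}(f_j) = K_1 f_{j-1} + K_0 f_j$ order by order, showing the top-degree part $f_m$ must be a first integral of the linear part times a constant, and then that consistency at the next level forces $a=0$ and $f$ proportional to a power of $y$; irreducibility then pins down $f=y$, $K=z$. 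Statement (3) follows from (1) and (2): a rational first integral $p/q$ in lowest terms forces $p$ and $q$ to be Darboux polynomials with the \emph{same} cofactor, but the only Darboux polynomial is $y$ with cofactor $z\neq 0$, and $y/y$ is trivial, so no non-constant rational first integral exists (and when $a\neq0$ there are no Darboux polynomials at all).

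For statement (4), exponential factors $E = \exp(g/h)$ must have $h$ a product of Darboux polynomials; when $a\neq0$ there are none, so $h$ is constant and $E = \exp(g)$ with $g$ polynomial, and $\mathcal{X}(g) = L$ for a polynomial cofactor $L$ of degree $\le 1$; solving $\mathcal{X}g = L$ by the same homogeneous cascade shows $g$ must be (up to constant and additive constant) $g = x$, giving cofactor $L = y$ — hence the single exponential factor $e^x$. (When $a=0$ one must additionally consider $h$ a power of $y$, but the absolute term $b|y|$ and the extra check that $\exp(1/y^k)$ cannot have polynomial cofactor rules this out; I would treat the $a=0$ case explicitly.) Finally, statement (5): a Darboux first integral would be of the form $f_1^{\lambda_1}\cdots E_1^{\mu_1}\cdots$, and its logarithmic derivative gives $\sum \lambda_i K_i + \sum \mu_j L_j = 0$; but the only available cofactors are $z$ (from $y$, only when $a=0$) and $y$ (from $e^x$), and no nontrivial real linear combination $\lambda z + \mu y$ vanishes identically, so no Darboux first integral exists. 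I expect the main obstacle to be statement (2) — carefully controlling the order-by-order cascade for the Darboux polynomial, in particular proving the degree bound on the cofactor rigorously and handling the resonant/degenerate structure of the linear part (double-zero eigenvalue from the $z$-direction) so that no exotic higher-degree Darboux polynomial slips through; the $a=0$ versus $a\neq0$ dichotomy must be tracked consistently throughout all five parts.
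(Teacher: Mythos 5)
Your overall architecture is sound, and for statements (3), (4) and (5) your reasoning is essentially the paper's: (3) follows because a rational first integral in lowest terms would require two Darboux polynomials sharing a cofactor (or a polynomial first integral), which (1) and (2) exclude; (4) reduces via the invariance of the denominator to $E=e^{g}$ with $\mathcal{X}(g)=L$, plus a separate check of $h=y^{m}$ when $a=0$; and (5) is exactly the cofactor relation $\lambda z+\mu y=0$ forcing $\lambda=\mu=0$. These parts would survive contact with the details.

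The genuine gap is in statements (1) and (2), which are the computational core, and your proposed route there is both incomplete and structurally confused. You split $\mathcal{X}=\mathcal{X}_1+\mathcal{X}_2$ into linear plus quadratic parts and expand $H$ (resp.\ $f$) in total-degree homogeneous components, but you then assert that the \emph{top}-degree part must be a first integral (resp.\ eigenfunction) of the \emph{linear} part. That is backwards: since $\mathcal{X}_2$ raises degree by one, the top-degree component of $\mathcal{X}(f)=Kf$ couples $f_m$ to the quadratic field $yz\,\partial_y-(cxy+x^2)\partial_z$ and to the degree-one part of $K$, while it is the \emph{lowest}-degree component that sees only $\mathcal{X}_1$. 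Beyond this, the decisive steps are only asserted: "plugging these into the next-order equations forces contradictions" and "consistency at the next level forces $a=0$ and $f$ proportional to a power of $y$" are precisely the resonance computations one has to carry out, and nothing in your sketch shows how the obstruction actually appears or why no higher-degree Darboux polynomial survives. For comparison, the paper avoids the total-degree cascade entirely: for (1) it expands $H=\sum H_i(x,y)z^{i}$ in powers of $z$ and extracts, at order $z^{n-1}$, the ODE $nx^{2}F_n-axF_n'=0$ whose solutions $C_ne^{nx^{2}/(2a)}$ are non-polynomial unless $n=0$; for (2) it first proves two lemmas pinning the cofactor down to $K=k_3z$ with $k_3\in\mathbb{Z}^{+}$ (by showing $k_2=0$ and then $k_0=k_1=0$), and only then runs a weight-homogeneous expansion with weight $(0,0,1)$ in $z$ alone, which terminates in the explicit system (\ref{Yp2})--(\ref{Ym1}) forcing $W_1=0$, $W_0$ constant and $a=0$. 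Until you either correct and actually execute your cascade (including a proof that the cofactor reduces to $k_3z$) or adopt a decomposition of this kind, statements (1) and (2) remain unproved, and everything downstream of them is conditional.
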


\begin{theorem}\label{THformal}
	System (\ref{1ybig}) has a formal first integral in the neighborhood of equilibrium points $E(0, 0, z_0)$ for following cases: 
\begin{enumerate}
	\item $z_0>4a$, where $a>0$. 
	\item $a<0$ and $z_0 \in \mathbb{R} \setminus \{0\}$.
\end{enumerate}
	Moreover, System (\ref{1ybig}) has an analytic first integral in the neighborhood of equilibrium points $E(0,0,z_0)$ for $z_0 \in (-2\sqrt{a} , 2\sqrt{a})\setminus \{0\}$ and $a>0$.
\end{theorem}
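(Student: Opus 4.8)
The plan is to read off the linear part of (\ref{1ybig}) at $E=(0,0,z_0)$ and then apply the standard machinery for first integrals near a singular point carrying a simple zero eigenvalue. The Jacobian at $E$ is
\[
\left(\begin{array}{ccc} 0 & 1 & 0\\ -a & z_0 & 0\\ 0 & b & 0\end{array}\right),
\]
with eigenvalues $0$ and $\lambda_{\pm}=\frac{1}{2}\bigl(z_0\pm\sqrt{z_0^{2}-4a}\bigr)$, so that $\lambda_++\lambda_-=z_0$ and $\lambda_+\lambda_-=a$. The eigenvector of the eigenvalue $0$ is $(0,0,1)$, i.e.\ it points along the line of equilibria; hence that line is an analytic one-dimensional invariant center manifold on which the vector field vanishes identically. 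This structural fact is what the argument rests on.

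For the formal statements (items 1 and 2) I would invoke the known existence theorem for a formal first integral at a singular point with a simple zero eigenvalue, checking its hypotheses against the eigenvalue list above, and I would also record the underlying mechanism, as follows. Bring the system into Poincar\'{e}--Dulac normal form with respect to the semisimple part $\mathrm{diag}(0,\lambda_+,\lambda_-)$ (complexifying when $\lambda_\pm$ are non-real), write the normalized coordinates as $(u,v,w)$ with $u$ along the center direction, and build a formal first integral $H=u+H_2+H_3+\cdots$ degree by degree. For $a>0$ with $z_0>4a$ the pair $\lambda_\pm$ has no resonance $k_+\lambda_++k_-\lambda_-=0$ with $(k_+,k_-)\in\mathbb{Z}_{\ge 0}^{2}\setminus\{0\}$ --- both have real part $z_0/2>0$ --- so the kernel of the homological operator at degree $m$ reduces to the span of $u^{m}$; the only obstruction to the recursion is then the $u^{m}$-coefficient $c_m$ of the right-hand side, and it vanishes because, restricting the relation $\dot H=c_m u^{m}+O\!\bigl(|(u,v,w)|^{m+1}\bigr)$ to the invariant line $\{v=w=0\}$ --- where $\dot H\equiv 0$ since the field is zero there --- one gets $c_m=0$. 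The case $a<0$, $z_0\neq 0$ is entirely analogous, with $\lambda_\pm$ real of opposite sign, the exceptional resonant values of $\lambda_+/\lambda_-$ being covered by the cited theorem.

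The analytic statement needs, in addition, convergence of this construction, and this is the only genuinely delicate point. When $z_0\in(-2\sqrt a,2\sqrt a)\setminus\{0\}$ and $a>0$ one has $z_0^{2}<4a$, so $\lambda_\pm=\frac{z_0}{2}\pm\frac{i}{2}\sqrt{4a-z_0^{2}}$ is a complex conjugate pair with non-zero real part $z_0/2$; the segment joining $\lambda_+$ and $\lambda_-$ avoids $0$ and the pair is non-resonant, so the two-dimensional hyperbolic block lies in the Poincar\'{e} domain and the associated normalizing transformation is analytic. Together with the analyticity of the center manifold --- which here is nothing but the line of equilibria --- this promotes the formal first integral to a convergent one, and at that step I would quote the appropriate convergence theorem rather than reproduce the small-divisor estimates.

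The hard part is precisely this last step. Formal existence is essentially forced by the persistent line of equilibria and uses almost nothing about the parameters, whereas analyticity hinges on controlling the denominators $k_+\lambda_++k_-\lambda_-$, which is why the analytic conclusion is confined to $|z_0|<2\sqrt a$, where $\lambda_\pm$ stay off the imaginary axis. The two excluded boundary cases --- $z_0=0$, where $\lambda_\pm$ become purely imaginary (the zero-Hopf point of Theorem \ref{Z-H-THe}), and $|z_0|=2\sqrt a$, where $\lambda_\pm$ coincide --- lie outside this scheme and each require a separate analysis.
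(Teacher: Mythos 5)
Your proposal follows essentially the same route as the paper: compute the spectrum $\{0,\lambda_\pm\}$ of the Jacobian at $E(0,0,z_0)$, note that the equilibrium is non-isolated (it sits on the line of equilibria), check the non-resonance or common-sign conditions on $\lambda_\pm$, and invoke the local integrability results for a simple zero eigenvalue (the paper's Theorems \ref{Theoana1} and \ref{Theoana2}); your Poincar\'e--Dulac sketch is just the mechanism behind those cited theorems, not a different argument. The one caveat is the case $a<0$, where $\lambda_\pm$ are real of opposite sign and resonances $k_+\lambda_++k_-\lambda_-=0$ genuinely occur for a dense set of $z_0$ (so the hypothesis of the formal-integral theorem can fail); you gloss over this (``covered by the cited theorem''), but the paper's own proof is equally loose at exactly this point, so it is a shared weakness rather than a divergence in approach.
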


The proof of Theorem \ref{INT-S1-big} and \ref{THformal} is given in Section \ref{Sec INT}. It is important to note that the results of Theorem \ref{INT-S1-big} and \ref{THformal} are also satisfied system (\ref{1ysmall}), and their proof is mainly similar. So, we only consider the results on system (\ref{1ybig}).

\section{Integrability for smooth system}

In this section, we give some results on the Darboux theory of integrability we use throughout this investigation. The researchers [\cite{dumortier2006qualitative},  \cite{christopher2004darboux}, \cite{llibre2015darboux},  \cite{barreira2015integrability}, \cite{llibre2012darboux}] have contributed into great details about the Darboux theory of this type of system.

The vector field corresponding to system (\ref{1ybig}) is
\begin{equation}
\mathcal{X}=y\frac{\partial}{\partial x} +(-ax+yz) \frac{\partial}{\partial y} +(b y -cxy-x^2) \frac{\partial}{\partial z}\, .
\end{equation}

We called $f \in \mathbb{C}[x, y, z]$ is a Darboux polynomial of the vector field $\mathcal{X}$ if there exists a polynomial (which is the cofactor of $f$) $K \in \mathbb{C}[x, y, z]$ such that $\mathcal{X}(f)=K f $. Where $\mathbb{C}$ denotes the ring of all complex polynomials in the variables $x, y, z$. Here, the cofactor is a polynomial of degree at most $1$. If $f(x,y,z)$ is a Darboux polynomial of system (\ref{1ybig}), then $f(x,y,z)=0$ is an invariant algebraic surface in $\mathbb{R}^3$, i.e. if an orbit has the point on the surface $f(x, y, z) = 0$, the entire orbit is contained on it.

A Darboux polynomial $f(x,y,z) \in \mathbb{C}[x,y,z]$ with a zero cofactor is defined as a polynomial first integral of the system (\ref{1ybig}). When $f$ is a rational function that is not a polynomial, we say that $f$ is a rational first integral. If $f$ is a formal series, then $f$ is called a formal first integral. Moreover, if $f$ is an analytic
function, we say that $f$ is an analytic first integral.

A non-constant function $E= e^{(g/h)}$ is an exponential factor of system (\ref{1ybig}) with $g$ and $h$ are coprime polynomials in  $\mathbb{C}[x, y, z]$, if $\mathcal{X} (E)=L E$. Where the polynomial $L$ is a cofactor of $E$ with a maximum degree of one, see [\cite{christopher2007multiplicity}].

If a first integral $f$ of system (\ref{1ybig}) is of the following form,
\begin{equation}\label{H dar type}
	H=f_1^{\lambda_1} f_2^{\lambda_2} \ldots f_p^{\lambda_p} \, E_1^{\mu_1} E_2^{\mu_2} \ldots E_q^{\mu_q},
\end{equation}
it is called a Darboux type or a Darboux first integral, where $f_j$ is a Darboux polynomial, $E_k$ is an exponential factor, $\lambda_j, \mu_k \in \mathbb{C}$ for $j=1,\ldots,p$, $k=1,\ldots,q$. The following theorem gives the condition for enough number of Darboux polynomials, and exponential factors need to have a Darboux first integral.
\begin{theorem} \label{thDarType}
Assume that system (\ref{1ybig}) admits $p$ invariant algebraic surfaces $f_i = 0$ with cofactors $K_i$ for $i = 1,\ldots, p$, and $q$ exponential factors $E_j = e^{(g_j/h_j)}$ with cofactors $L_j$ for $j=1,\ldots, q$. Then there exist $\lambda _i$ and $\mu_j$ in $\mathbb{C}$, which are not all zero, such that
	\begin{equation}\label{EqDarType}
		\sum_{i=1}^{p} \lambda_i K_i +\sum_{j=1}^{q} \mu_j L_i = 0
	\end{equation}
	if and only if system (\ref{1ybig}) has Darboux first integrl $H$ of the form (\ref{H dar type}).
\end{theorem}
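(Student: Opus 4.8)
The plan is to reduce the entire statement to the derivation property of the vector field $\mathcal{X}$, so that the candidate Darboux function becomes transparent under differentiation after passing to logarithmic derivatives. First I would fix, for arbitrary complex exponents $\lambda_i,\mu_j$, the function
\[
H=f_1^{\lambda_1} f_2^{\lambda_2}\cdots f_p^{\lambda_p}\; E_1^{\mu_1} E_2^{\mu_2}\cdots E_q^{\mu_q},
\]
which is well defined and non-vanishing on the open set $U$ where none of $f_1,\dots,f_p$ and none of the denominators $h_1,\dots,h_q$ vanish. Then I would compute $\mathcal{X}(H)$ with the Leibniz rule, divide by $H$, and substitute the defining relations $\mathcal{X}(f_i)=K_i f_i$ and $\mathcal{X}(E_j)=L_j E_j$; this gives the key identity
\[
\frac{\mathcal{X}(H)}{H}=\sum_{i=1}^{p}\lambda_i\frac{\mathcal{X}(f_i)}{f_i}+\sum_{j=1}^{q}\mu_j\frac{\mathcal{X}(E_j)}{E_j}=\sum_{i=1}^{p}\lambda_i K_i+\sum_{j=1}^{q}\mu_j L_j
\]
on $U$, that is, $\mathcal{X}(H)=\big(\sum_{i=1}^{p}\lambda_i K_i+\sum_{j=1}^{q}\mu_j L_j\big)H$ there.

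Granting this identity, the two implications of Theorem \ref{thDarType} are short. For the ``if'' direction I would take $\lambda_i,\mu_j$, not all zero, obeying the relation (\ref{EqDarType}); then the bracket above vanishes identically, hence $\mathcal{X}(H)\equiv 0$ and $H$ is a first integral of (\ref{1ybig}) of the Darboux form (\ref{H dar type}). For the ``only if'' direction I would start from a non-constant first integral $H$ of the form (\ref{H dar type}), so that $\mathcal{X}(H)=0$; since $H\not\equiv 0$ on $U$, the identity forces $\sum_{i=1}^{p}\lambda_i K_i+\sum_{j=1}^{q}\mu_j L_j\equiv 0$ on $U$, and hence identically as a polynomial of degree at most one, and the exponents cannot all be zero, for otherwise $H\equiv 1$.

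This is the classical Darboux integrability criterion, and the sketch above is the standard argument found in the references cited at the beginning of this section; there is no analytic obstruction, the whole computation resting on $\mathcal{X}$ being a first-order differential operator. The one point where I expect to spend real care is the non-triviality bookkeeping: checking that the $H$ constructed in the ``if'' direction can be taken non-constant, and dually that a non-constant $H$ in the ``only if'' direction really forces a nontrivial relation (\ref{EqDarType}). This I would settle by differentiating $\log H=0$, clearing denominators, and invoking the standing conventions that each $f_i$ is irreducible, the $f_i$ are pairwise coprime, and each pair $g_j,h_j$ is coprime, so that $H$ can be constant only when all the exponents $\lambda_i,\mu_j$ vanish. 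Modulo this, Theorem \ref{thDarType} follows.
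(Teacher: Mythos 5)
Your argument is correct, and it is the standard proof of the Darboux integrability criterion via the logarithmic derivative identity $\mathcal{X}(H)/H=\sum_i\lambda_i K_i+\sum_j\mu_j L_j$. Note that the paper itself does not prove Theorem \ref{thDarType}: it is imported as a known result from the cited literature on Darboux theory (e.g.\ Dumortier--Llibre--Art\'es and Christopher--Llibre--Pereira), and the proof given there is essentially the one you sketch, including the non-triviality bookkeeping that a non-constant $H$ forces a non-trivial relation and conversely.
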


The following two results are associated with the existence of Darboux polynomials. For proof, see [\cite{christopher2007multiplicity}, \cite{llibre2009darbouxa}, \cite{llibre2012darboux}].

\begin{lemma} \label{THration1}
	For system (\ref{1ybig}), the existence of a rational first integral indicates the presence of either a polynomial first integral or two Darboux polynomials with a non-zero cofactor.
\end{lemma}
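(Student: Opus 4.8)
The plan is to run the classical Darboux argument: start from a rational first integral, written in lowest terms as $H=f/g$ with $f,g\in\mathbb{C}[x,y,z]$ and $\gcd(f,g)=1$, and extract Darboux polynomials from $f$ and $g$. First I would dispose of the degenerate situations. Since $H$ is rational but not a polynomial, $g$ cannot be a nonzero constant. If instead $f$ is constant, then from $\mathcal{X}(1/g)=-\mathcal{X}(g)/g^2=0$ one gets $\mathcal{X}(g)=0$, so $g$ is a polynomial first integral and the first alternative already holds; hence it remains to treat the case in which both $f$ and $g$ are non-constant.

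Then, differentiating $H$ along the flow, $\mathcal{X}(H)=0$ is equivalent to $g\,\mathcal{X}(f)=f\,\mathcal{X}(g)$. Because $f$ and $g$ are coprime, this forces $f\mid\mathcal{X}(f)$ and $g\mid\mathcal{X}(g)$, so we may write $\mathcal{X}(f)=K_1 f$ and $\mathcal{X}(g)=K_2 g$ for some $K_1,K_2\in\mathbb{C}[x,y,z]$. Substituting these back and cancelling the common factor $fg$ yields $K_1=K_2=:K$. Since the vector field $\mathcal{X}$ of system (\ref{1ybig}) has degree $2$, the inequality $\deg(Kf)=\deg(\mathcal{X}(f))\le\deg(f)+1$ gives $\deg K\le 1$, so $K$ is an admissible cofactor. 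Thus $f$ and $g$ are Darboux polynomials of (\ref{1ybig}) sharing the common cofactor $K$.

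Finally I would split according to whether $K\equiv 0$. If $K\equiv 0$ then $\mathcal{X}(f)=0$, i.e.\ $f$ is a polynomial first integral; if $K\not\equiv 0$ then $f$ and $g$ are two Darboux polynomials with a non-zero cofactor. Either way one of the two announced alternatives holds. I do not expect a genuine obstacle here: the only point requiring a little care is the coprimality bookkeeping that turns $g\,\mathcal{X}(f)=f\,\mathcal{X}(g)$ into the two separate divisibility relations, together with the observation that the notion of Darboux polynomial used in this paper does not demand irreducibility, so $f$ and $g$ qualify as they stand. The argument is generic and uses nothing about (\ref{1ybig}) beyond the fact that it is a quadratic polynomial vector field.
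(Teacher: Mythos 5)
Your argument is correct: writing the rational first integral in lowest terms as $H=f/g$, using $\mathcal{X}(H)=0$ to get $g\,\mathcal{X}(f)=f\,\mathcal{X}(g)$, invoking coprimality in the UFD $\mathbb{C}[x,y,z]$ to obtain $\mathcal{X}(f)=Kf$ and $\mathcal{X}(g)=Kg$ with a common cofactor of degree at most one, and then splitting on whether $K\equiv 0$ is exactly the standard route, and your handling of the degenerate cases ($f$ constant, $g$ necessarily non-constant) is sound. Note that the paper does not prove this lemma at all --- it only cites the literature for it --- and your proof is precisely the classical argument found in those references, so there is no divergence to report.
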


\begin{proposition}\label{propo exp}
	Both of the following statements hold.
	\begin{enumerate}
		\item If $E=e^{(g/h)}$ is an exponential factor for the polynomial system (\ref{1ybig}) and $h$ is not a constant polynomial, then $h=0$ is an invariant algebraic surface.
		\item Eventually, $e^g$ can be an exponential factor, coming from the multiplicity of the infinity.
	\end{enumerate}
\end{proposition}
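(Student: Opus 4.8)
The plan is to handle the two assertions separately: statement (1) is a short formal computation starting from the definition of an exponential factor, while statement (2) is a structural remark that I would justify by passing to the Poincar\'e compactification of (\ref{1ybig}) and invoking the known link between the multiplicity of the invariant hyperplane at infinity and the existence of exponential factors with trivial denominator.

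For (1), I would begin with an exponential factor $E=e^{g/h}$, where $g,h\in\mathbb{C}[x,y,z]$ are coprime and $h$ is non-constant, with cofactor $L$ of degree at most one, so that $\mathcal{X}(E)=LE$. Applying the chain rule gives
\[
\mathcal{X}(E)=E\,\mathcal{X}\!\left(\frac{g}{h}\right)=E\cdot\frac{h\,\mathcal{X}(g)-g\,\mathcal{X}(h)}{h^{2}},
\]
so after cancelling the nowhere-vanishing factor $E$ one obtains the polynomial identity $h\,\mathcal{X}(g)-g\,\mathcal{X}(h)=L\,h^{2}$, equivalently $g\,\mathcal{X}(h)=h\bigl(\mathcal{X}(g)-L\,h\bigr)$. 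Hence $h$ divides $g\,\mathcal{X}(h)$, and since $\gcd(g,h)=1$ it must divide $\mathcal{X}(h)$; write $\mathcal{X}(h)=\widetilde{K}\,h$. Because the vector field $\mathcal{X}$ has degree two, $\deg\mathcal{X}(h)\le\deg h+1$, which forces $\deg\widetilde{K}\le 1$. Therefore $h$ is a Darboux polynomial of $\mathcal{X}$ with cofactor $\widetilde{K}$, i.e. $h=0$ is an invariant algebraic surface, as claimed (and if $h$ is reducible, one applies the same argument to each irreducible factor).

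For (2), the idea is that in the Poincar\'e compactification of (\ref{1ybig}) the hyperplane at infinity is an invariant algebraic surface; when its multiplicity exceeds one, the general theory (see [\cite{christopher2007multiplicity}]) produces an exponential factor whose denominator is a power of the polynomial defining infinity, which in an affine chart reduces to an exponential factor of the form $e^{g}$ with $g$ a polynomial and trivial denominator. I expect the main obstacle to lie precisely here: making the informal phrase ``coming from the multiplicity of the infinity'' rigorous requires the full apparatus of the Poincar\'e compactification together with the notion of multiplicity of invariant algebraic surfaces, so in practice this step would be completed by citing [\cite{christopher2007multiplicity}, \cite{llibre2009darbouxa}] rather than reproving it. Statement (1), by contrast, is self-contained once the coprimality-to-divisibility step is in place.
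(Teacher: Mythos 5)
Your argument for part (1) is correct and is precisely the standard proof contained in the references the paper cites for this proposition ([\cite{christopher2007multiplicity}, \cite{llibre2009darbouxa}, \cite{llibre2012darboux}]) — the paper itself supplies no proof, only those citations — with the key steps (the identity $h\,\mathcal{X}(g)-g\,\mathcal{X}(h)=Lh^{2}$, the coprimality-to-divisibility deduction $h\mid\mathcal{X}(h)$, and the degree bound $\deg\widetilde{K}\le 1$ coming from the vector field having degree two) all in order. Your treatment of part (2), deferring the ``multiplicity of infinity'' statement to [\cite{christopher2007multiplicity}], likewise matches what the paper does, so the proposal is correct and takes essentially the same route.
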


To prove Theorem \ref{INT-S1-big} we use the weight-homogeneous polynomials. Where the weight-homogeneous polynomial has been used extensively in several standard systems  [\cite{llibre2002invariant},  \cite{llibre2012integrability}, \cite{jalal2020darboux}].

A polynomial $h(x)$ with $x\in \mathbb{R}^n$ is said to be weight homogeneous if there exist $r = (r_1,\ldots, r_n)\in \mathbb{N}^n$ and $m\in \mathbb{N}$ such that  $h(\mu^{r_1} x_1,\ldots,\mu^{r_n} x_n) = \mu^m h(x)$ for all $\mu >0$. $r$ is called the weight exponent, and $m$ is called the weight degree.

We use the following two result from [\cite{zhang2003local}, \cite{zhang2017note}].
Firstly, we consider an analytic differential system
\begin{eqnarray} \label{formaleqq}
	\dot{\mathbf{x}}=\mathbf{f(x)} \, ,\quad \mathbf{x}= (x_1, x_2, x_3) \in \mathbb{R}^3
\end{eqnarray}
where $\mathbf{f(x)}$ is a vector-valued function satisfying $\mathbf{f(0)=0}$. We denote by $A=\partial \mathbf{f} / \partial \mathbf{x}$ the Jacobian matrix of system (\ref{formaleqq}) at $\mathbf{x = 0}$.

\begin{theorem}\label{Theoana1}
	Assume that the eigenvalues $\lambda_1, \lambda_2, \lambda_3$ of the Jacobian matrix $A$ satisfy $\lambda_1=0$ and $k_2 \lambda_2 +k_3 \lambda_3 \ne 0$ for any $k_2, k_3, \in \mathbb{Z}^+ \cup {0}$ with $k_2 +k_3 \ge 1$.
	System (\ref{formaleqq}) has a formal first integral in the neighborhood of $\mathbf{x=0}$ if and only if the equilibrium point $\mathbf{x=0}$ is not isolated. In particular, if the equilibrium point $\mathbf{x=0}$ is isolated, then system (\ref{formaleqq}) has no analytic first integral in a neighborhood of $\mathbf{x=0}$.
\end{theorem}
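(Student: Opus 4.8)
The plan is to pass to a formal Poincaré--Dulac normal form, in which the eigenvalue hypothesis forces an essentially triangular structure, and then read off the existence (or not) of a formal first integral from one scalar equation. First I would put the linear part $A$ into complex Jordan form by a linear change $\mathbf{x}\mapsto(u,v,w)$, where $u$ is attached to $\lambda_1=0$ and $(v,w)$ to the block carrying $\lambda_2,\lambda_3$; since $(k_2,k_3)=(1,0)$ and $(0,1)$ in the hypothesis give $\lambda_2,\lambda_3\neq0$, this block is invertible. The decisive point is the resonance count: a monomial $u^{a}v^{b}w^{c}$ in the $x_\ell$-component is resonant iff $\lambda_\ell=b\lambda_2+c\lambda_3$. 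For $\ell=1$ the hypothesis $b\lambda_2+c\lambda_3\neq0$ unless $b=c=0$ forces the resonant part of $\dot u$ to depend on $u$ alone, so the normal form reads
\begin{equation}
\dot u=\phi(u),\qquad \dot v=v\,\alpha(u)+R_2,\qquad \dot w=w\,\beta(u)+R_3,
\end{equation}
with $\phi(u)=O(u^2)$, $\alpha(u)=\lambda_2+O(u)$, $\beta(u)=\lambda_3+O(u)$; the extra resonant terms $R_2,R_3$ occur only in the degenerate cases $\lambda_2=m\lambda_3$ or $\lambda_3=m\lambda_2$ with $m\ge2$ (giving monomials $u^{a}w^{m}$ resp. $u^{a}v^{m}$) or $\lambda_2=\lambda_3$ (a Jordan coupling), and each of these strictly lowers the $(v,w)$-degree or acts nilpotently within it.

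Next I would characterise the isolated/non-isolated dichotomy through $\phi$. Because the $(v,w)$-block of $A$ is invertible, the implicit function theorem solves $f_2=f_3=0$ for $(v,w)$ as analytic functions of $u$ near the origin, so the equilibria near $\mathbf{0}$ form at most the graph of a curve tangent to the $\lambda_1$-direction; in normal-form coordinates this curve is $\{v=w=0\}$ and carries the dynamics $\dot u=\phi(u)$. Hence $\mathbf{x}=\mathbf{0}$ is non-isolated if and only if $\phi\equiv0$. This yields the easy implication at once: if $\phi\equiv0$ then $\dot u=0$, so the normal-form coordinate $u$, regarded as a formal series in the original variables, is a non-constant formal first integral.

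The engine of the converse is the following graded induction. I would expand an arbitrary formal first integral in normal-form coordinates as $H=\sum_{j,k\ge0}H_{jk}(u)\,v^{j}w^{k}$ and substitute into $\mathcal{X}(H)=0$; collecting the coefficient of $v^{j}w^{k}$ gives, for each $(j,k)$,
\begin{equation}
\phi(u)\,H_{jk}'(u)+\bigl(j\,\alpha(u)+k\,\beta(u)\bigr)H_{jk}(u)+C_{jk}(u)=0,
\end{equation}
where $C_{jk}$ gathers the contributions of $R_2,R_3$ and involves only $H_{j'k'}$ with $j'+k'<j+k$ (or, in the Jordan case, the same degree but through a strictly triangular coupling). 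Assuming $\phi\not\equiv0$ with $\mathrm{ord}\,\phi\ge2$, I induct on $d=j+k$: the case $(0,0)$ gives $\phi H_{00}'=0$, so $H_{00}$ is constant, while for $d\ge1$ the inductive hypothesis kills $C_{jk}$, leaving $\phi H_{jk}'=-(j\alpha+k\beta)H_{jk}$. Comparing lowest orders, the right side has order $\mathrm{ord}\,H_{jk}$ (its constant coefficient $j\lambda_2+k\lambda_3\neq0$ by hypothesis), whereas $\phi H_{jk}'$ has order at least $\mathrm{ord}\,H_{jk}+1$; this forces $H_{jk}\equiv0$. Thus $H$ is constant, so no formal first integral exists when $\phi\not\equiv0$, i.e. when $\mathbf{x}=\mathbf{0}$ is isolated. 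Combining the two implications proves the equivalence, and since every analytic first integral is in particular formal, the isolated case admits no analytic first integral, giving the final assertion.

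The main obstacle is the normal-form bookkeeping in the resonant and Jordan cases: I must check that every resonant monomial permitted by $\lambda_1=0$ together with the hypothesis either strictly decreases the $(v,w)$-degree or couples within a degree only nilpotently, so that the induction of the third step genuinely closes -- and this is precisely where $k_2\lambda_2+k_3\lambda_3\neq0$ is used decisively, both to eliminate diagonal obstructions and to keep the constant term $j\lambda_2+k\lambda_3$ nonzero. Two secondary points also need care: the reality of the construction (when $\lambda_2,\lambda_3$ form a complex pair the Jordan change is complex, so I would carry the conjugation symmetry through to recover a real formal first integral), and the fact that the normalising transformation is only a formal diffeomorphism, which is harmless since every statement concerns formal, or a fortiori analytic, first integrals.
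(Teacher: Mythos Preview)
The paper does not prove this theorem at all: immediately before stating it the authors write ``We use the following two result from [\cite{zhang2003local}, \cite{zhang2017note}]'', so Theorem~\ref{Theoana1} (together with Theorem~\ref{Theoana2}) is quoted from Zhang's work and then invoked as a black box in the proof of Theorem~\ref{THformal}. There is thus no in-paper argument to compare against.

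For what it is worth, your Poincar\'e--Dulac outline is precisely the route taken in the cited references, and the induction on the $(v,w)$-degree with the nonresonance condition supplying the nonzero constant $j\lambda_2+k\lambda_3$ is the right engine. One step deserves tightening: you identify the implicit-function-theorem curve $\{f_2=f_3=0\}$ with the formal centre manifold $\{v=w=0\}$ in normal-form coordinates, but the normalising transformation conjugates the full vector field, not its individual components, so it need not send $\{f_2=f_3=0\}$ to $\{f_2^{\mathrm{NF}}=f_3^{\mathrm{NF}}=0\}$. The clean fix is to work with the equilibrium set itself, which \emph{is} preserved by the conjugacy: the formal implicit function theorem forces any formal curve on which the normal-form $\dot v,\dot w$ vanish to be $\{v=w=0\}$, and pulling back, uniqueness in the analytic implicit function theorem shows this formal curve coincides with the analytic curve $(u,V(u),W(u))$; hence $g(u):=f_1(u,V(u),W(u))$ and $\phi(u)$ vanish identically together, giving the bridge ``non-isolated $\Longleftrightarrow\phi\equiv0$'' that you need. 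With that adjustment the rest of your plan goes through.
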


\begin{theorem}\label{Theoana2}
	For the local analytic differential system (\ref{formaleqq}), assume that $\lambda_1 = 0$, and	$\lambda_2, \lambda_3$ either all have positive real parts or all have negative real parts. Then the system (\ref{formaleqq}) has an analytic first integral in a neighborhood of $\mathbf{x= 0}$ if and only if the equilibrium point $\mathbf{x= 0}$ is not isolated.
\end{theorem}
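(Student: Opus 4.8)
The plan is to obtain Theorem \ref{Theoana2} as a refinement of Theorem \ref{Theoana1}: the same-sign hypothesis will first be shown to imply the non-resonance hypothesis of Theorem \ref{Theoana1} (which settles the problem at the \emph{formal} level), and then the extra strength of the hypothesis will be used to upgrade the formal first integral to a convergent one. I would split the biconditional into its two implications and observe that one of them is essentially free.

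First I would check that the hypothesis of Theorem \ref{Theoana2} is strictly stronger than the non-resonance hypothesis of Theorem \ref{Theoana1}. For any $k_2, k_3 \in \mathbb{Z}^+ \cup \{0\}$ with $k_2 + k_3 \ge 1$ one has $\mathrm{Re}(k_2\lambda_2 + k_3\lambda_3) = k_2\,\mathrm{Re}\,\lambda_2 + k_3\,\mathrm{Re}\,\lambda_3$; since $\mathrm{Re}\,\lambda_2$ and $\mathrm{Re}\,\lambda_3$ are nonzero and of one common sign, this is a sum of terms of that sign with at least one present, hence nonzero, so $k_2\lambda_2 + k_3\lambda_3 \ne 0$. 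Thus Theorem \ref{Theoana1} applies to system (\ref{formaleqq}) and yields the backbone of the argument: system (\ref{formaleqq}) has a formal first integral near $\mathbf{x=0}$ if and only if $\mathbf{x=0}$ is non-isolated. The implication that an analytic first integral forces $\mathbf{x=0}$ to be non-isolated is then immediate, since an analytic first integral is in particular a formal one. Consequently the entire content of the theorem reduces to upgrading the formal first integral supplied by Theorem \ref{Theoana1} in the non-isolated case to a \emph{convergent} one.

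For that upgrade I would pass to analytic coordinates $(u,v,w)$ in which $A=\mathrm{diag}(0,B)$, where $u$ is the center variable and $B$ carries $\lambda_2,\lambda_3$. Writing the sought integral as a graded formal series $H=\sum_{m\ge 1} H_m$ and inserting it into $\mathcal{X}(H)=0$, the degree-$m$ part gives a homological equation $\mathcal{L}_0 H_m = R_m$, where $\mathcal{L}_0 = \lambda_2 v\,\partial_v + \lambda_3 w\,\partial_w$ (the center direction contributes nothing) and $R_m$ depends only on $H_1,\dots,H_{m-1}$. On a monomial $u^{k_1}v^{k_2}w^{k_3}$ the operator $\mathcal{L}_0$ acts as multiplication by $k_2\lambda_2 + k_3\lambda_3$, so the only kernel is the pure-center part $k_2=k_3=0$; one may take $H_1=u$, producing a genuine non-constant first integral whose higher-order terms are then uniquely determined.

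The crucial and hardest step is proving that this series converges, and this is exactly where the same-sign condition (rather than mere non-resonance) is used. Because $\mathrm{Re}\,\lambda_2$ and $\mathrm{Re}\,\lambda_3$ share a sign, the divisors obey the linear lower bound $|k_2\lambda_2 + k_3\lambda_3| \ge \delta\,(k_2+k_3)$ with $\delta=\min(|\mathrm{Re}\,\lambda_2|,|\mathrm{Re}\,\lambda_3|)>0$; equivalently, the hyperbolic spectrum $\{\lambda_2,\lambda_3\}$ lies in the Poincaré domain, so no small divisors arise in the homological equations. I would then run a majorant-series estimate: bounding $R_m$ by the lower-order homogeneous parts already constructed and dividing by the divisors, the linear growth $\delta(k_2+k_3)$ prevents accumulation of denominators and yields geometric bounds on the coefficients of $H$, giving convergence on a neighborhood of $\mathbf{x=0}$. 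A more geometric alternative for this last step would be to reduce to the one-dimensional center manifold, which is analytic and carries trivial dynamics in the non-isolated case, and to define $H$ directly as the center coordinate of the asymptotic limit point of each orbit using the analytic dependence of the stable (or unstable) foliation transverse to it. In either route the sole obstacle is the same, namely turning the formal solution into an honestly convergent one, and the Poincaré-domain/same-sign condition is precisely the ingredient that makes this possible.
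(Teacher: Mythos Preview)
The paper does not prove Theorem~\ref{Theoana2}: it is quoted verbatim as a known result from \cite{zhang2003local,zhang2017note} and used only as a black box in the proof of Theorem~\ref{THformal}. So there is no ``paper's own proof'' to compare your proposal against.

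As a standalone sketch your plan is coherent. The reduction to Theorem~\ref{Theoana1} for the formal level is correct (the same-sign hypothesis does force $k_2\lambda_2+k_3\lambda_3\ne 0$), and the identification of the remaining work---upgrading the formal integral to a convergent one via a Poincar\'e-domain small-divisor bound $|k_2\lambda_2+k_3\lambda_3|\ge \delta(k_2+k_3)$ and a majorant argument---is indeed the heart of Zhang's proof. Two points you would need to make precise if you actually carried this out: (i) the solvability of the homological equation at each degree requires the pure-center (resonant) part of $R_m$ to vanish, and it is exactly the non-isolatedness of the equilibrium that guarantees this compatibility---you lean on Theorem~\ref{Theoana1} here, which is fine, but the mechanism should be named; (ii) your alternative route via ``the one-dimensional center manifold, which is analytic'' is delicate, since center manifolds are not analytic in general; what saves you is that in the non-isolated case the analytic curve of equilibria itself serves as the center manifold, so analyticity is not an issue---but this should be said explicitly rather than assumed.
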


\section{Proof of Theorem \ref{Z-H-THe}}\label{Sec Z-H}
\begin{proof}[Proof of Theorem \ref{Z-H-THe}]
	Consider $b=\epsilon \beta$, and $\epsilon \geq 0$ in system (\ref{1}). If $\epsilon =0$, the Jacobian matrix of system (\ref{1}) at the equilibrium point $E(0,0,z_0)$  is
$$\left[ \begin {array}{ccc} 
	0& 1& 0\\
	-a &z_0 &0\\
	0 & 0 & 0
\end {array} \right]$$
Sitting $z_0=0$, Then the above matrix has on zero eigenvalue and $\pm \sqrt{-a}$ eigenvalues. Assuming that $a>0$, then system (\ref{1}) has a non-isolated zero-Hopf equilibrium localizing at the origin of the coordinates with eigenvalues $\lambda_1=0$, $\lambda_{2,3}=\pm i\sqrt{a}$.

We will use the averaging theory to estimate the limit cycle, which bifurcates from the origin. Before this, we must formulate system (\ref{1}) into normal form (\ref{Av1}) in the appendix. 
We start by rescaling the variables $(x,y,z)$ to $(\epsilon X,\epsilon Y,\epsilon Z)$, this gives us
\begin{equation} \label{z2}
\dot{X} =Y \, , \quad	
\dot{Y} =\epsilon YZ- a X \, , \quad			
\dot{Z} =\epsilon \beta \, |Y|-\epsilon c XY-\epsilon X^2\, .
\end{equation}
Now, using the change of variables $(X,Y,Z)\to (u,v,w)$ by
$$\left[ \begin {array}{c} X\\ Y\\ Z 
\end {array} \right] =
\left[ \begin {array}{ccc} 
	1& 0& 0\\
	0&-\sqrt{a} &0\\
	0& 0 & 1
\end {array} \right]
\left[ \begin {array}{c} u\\ v\\ w 
\end {array} \right] \, .$$
The following differential system is obtained
\begin{equation} \label{z3}
\dot{u} =-\sqrt{a} \, v \, , \quad	
\dot{v} =\sqrt{a} \, u+\epsilon \, vw \, , \quad
\dot{w} =\epsilon \left( \sqrt{a} \, \beta \, |v| + c\sqrt{a} \, uv- u^2\right) \, .
\end{equation}
Also the cylindrical coordinates $(r,\theta,w)$ are defined as $u= r  \cos (\theta)$ and  $v=r \sin (\theta)$. System (\ref{z3}) becomes
\begin{equation} \label{z4}
\dot{r} =\epsilon \, w\, r \, \sin^2 \theta  \, , \quad 	
\dot{\theta} =\sqrt {a}+\epsilon\, w\, \cos\theta \, \sin\theta \, , \quad
\dot{w} =\epsilon\left( \sqrt{a} \, \beta |r\sin \theta| + r^2 \cos \theta \left( c\sqrt{a} \sin \theta - \cos \theta \right)  \right) .
\end{equation}
We utilize $\theta$ as a new independent variable. System (\ref{z4}) then becomes
\begin{equation}\label{zav}
\left( \frac{dr}{d\theta} ,\frac{dw}{d\theta}\right) =\left(F_1(r,\theta,w) , F_2(r,\theta,w) \right) ,
\end{equation}
where
\begin{equation*}
F_1(r,\theta,w) =\frac{1}{\sqrt{a}} \, r\, w \, \sin^2\theta \, , \quad
F_2(r,\theta,w) =\beta\, |r| |\sin\theta| +c\, r^2 \, \cos\theta \sin \theta- \frac{r^2 \cos^2\theta}{\sqrt{a}} .
\end{equation*}
This gives the averaged function
\begin{equation}
F_{10}(r,w) =\frac{1}{2\pi}\int_{0}^{2\pi} F_1(r,\theta,w) d\theta=\frac{r\,w}{2\sqrt{a}} \, , \quad
F_{20}(r,w) =\frac{1}{2\pi}\int_{0}^{2\pi} F_2(r,\theta,w) d\theta =\frac{4\beta\sqrt{a}|r| - \pi r^2}{2\pi \sqrt{a}}\, .
\end{equation}
The system $F_{10}(r,w)=F_{20}(r,w)=0$ has a unique solution $(r^*,w^*)$ with $r^*>0$ for $\beta>0$, namely
$$(r^*,w^*)=\left(\frac{4\beta \, \sqrt{a}}{\pi} , 0 \right) .$$
The determinant of the Jacobian matrix at $(r^*,w^*)$ is
\begin{equation*}
\det \left(\frac{\partial F_0(r,w)}{(\partial r, \partial w)} \right)_{(r^*,w^*)} =-\frac{4 \beta^2}{\pi^2} \neq 0 \, , \quad where \, \beta \neq 0
\end{equation*}
This gives that, system (\ref{zav}) has a limit cycle $(r(\theta,\epsilon),w(\theta,\epsilon))$ satisfying $(r(\theta,\epsilon),w(\theta,\epsilon))\to (r^*,w^*) + O(\epsilon)$. Going back to system (\ref{1}), the limit cycle of system (\ref{1}) is $(x(t,\epsilon),y(t,\epsilon),z(t,\epsilon))$ fulfills
$$(x(0,\epsilon),y(0,\epsilon),z(0,\epsilon)) \to (\epsilon r^*,0,\epsilon w^*) + O(\epsilon ^2) \to \epsilon((4\beta \sqrt{a})/\pi,0,0)+O(\epsilon^2).$$
When $\epsilon \to 0$, this limit cycle tends to the equilibrium point located at the origin of the coordinates. Since, the eigenvalues of the matrix $\left(\frac{\partial F_0(r,w)}{(\partial r, \partial w)} \right) $ evaluated at $\left(\frac{4\beta \, \sqrt{a}}{\pi} , 0 \right)$ are $\pm \frac{2 \beta}{\pi}$, by Theorem \ref{AvThe}, it follows that the limit cycle defining by $\left(\frac{4\beta \, \sqrt{a}}{\pi} , 0 \right)$ is unstable.
\end{proof}

Here, we will exhibit an example showing that one unstable limit cycle is born at the origin when $\epsilon \to 0$ of Theorem \ref{Z-H-THe}. we consider $a=2.5$, $c=9$ and $b=\epsilon \beta$, where $\beta=2$ and $\epsilon=0.0001$. We obtain one positive solution for $r^*>0$, with the initial condition $(0.0004026336968,0,0)$ shown in Figure \ref{fig:limitcycle}.
\begin{figure}
	\begin{center}
		\includegraphics[width=1 \textwidth]{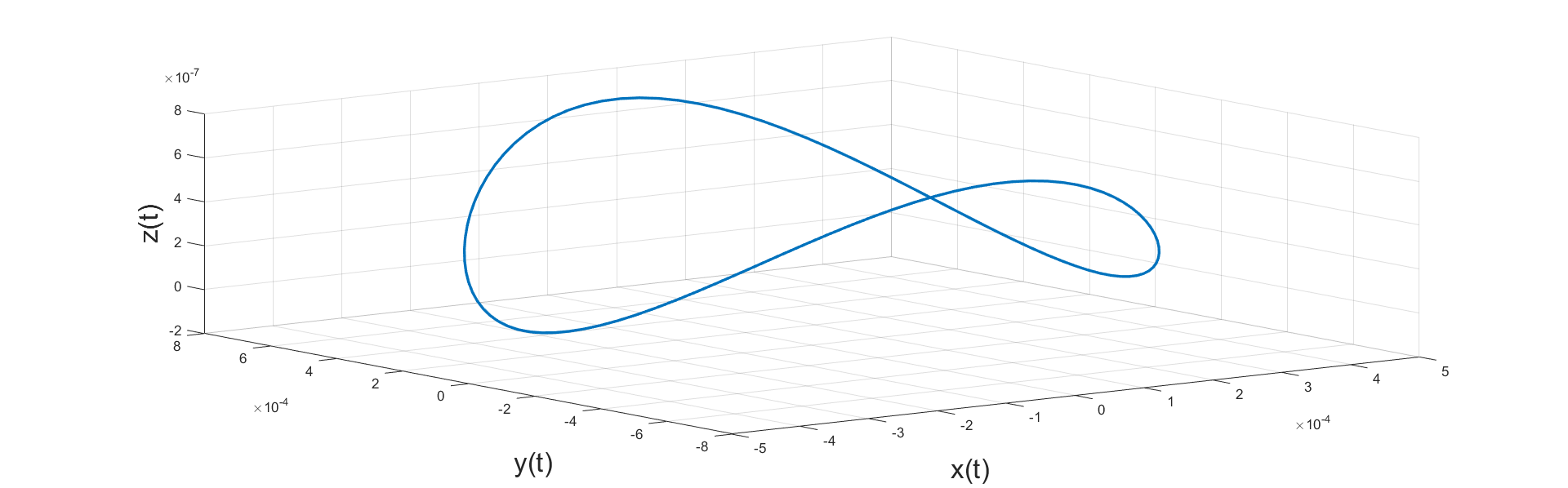}
		\caption{Limit cycle bifurcting from origin in system \ref{1}, for $a=2.5, c=9, b=\epsilon \beta, \beta=2$, $\epsilon=0.0001$ with initial condition:$(0.0004026336968,0,0)$.}
		\label{fig:limitcycle}
	\end{center}
\end{figure}

\section{Proof of Theorems \ref{INT-S1-big} and \ref{THformal}}\label{Sec INT}
\begin{proof}[Proof of Therem \ref{INT-S1-big} (1)]
Let $H=H(x,y,z)$ be a polynomial first integral of system (\ref{1ybig}). Then, it satisfies
\begin{equation}\label{i1}
	y\frac{\partial H}{\partial x}+(-ax+y z)\frac{\partial H}{\partial y}+(by-cxy-x^2)\frac{\partial H}{\partial z}=0.
\end{equation}
We can write $H$ in the form $H(x,y,z)= \sum_{i=0}^{n} H_i(x,y) z^i$, where each $H_i$ is a polynomial in the variables $x,y$. Now, computing the coefficient in (\ref{i1}) of $z^{n+1}$, we obtain
$y\frac{\partial}{\partial y} H_n (x,y)=0$. That is $H_n (x,y)=F_n (x)$, where $F_n(x)$ is a polynomial of the variable $x$. Computing also the coefficient in (\ref{i1}) of $z^{n}$, we obtain
$y\frac{\partial}{\partial y} H_{n-1} (x,y)+y\frac{d}{dx}F_n(x)=0.$
That is
$$H_{n-1} (x,y)=-y \frac{d}{dx}F_n (x)+F_{n-1} (x),$$
where $F_{n-1}(x)$ is a polynomial of the variable $x$.  Computing the coefficient in (\ref{i1}) of $z^{n-1}$, we get
$$y\frac{\partial}{\partial y} H_{n-2} (x,y)+ax\frac{d}{dx}F_n(x)-y^2\frac{d^2}{dx^2}F_n(x)+y\frac{d}{dx}F_{n-1}(x)+(b-cx)nyF_n(X)-nx^2F_n(x)=0.$$
Solving the above equation with respect to $H_{n-2}(x,y)$, we obtain
$$H_{n-2}(x,y)=\frac{1}{2}y^2\frac{d^2F_n(x)}{dx^2}-y\frac{dF_{n-1}(x)}{dx}-(b-cx)nyF_n(X)+\left(nx^2F_n(x)-ax\frac{dF_{n}(x)}{dx} \right)\ln(y) +F_{n-2}(x).$$
Since $H_{n-2}$ is a polynomial, then we must have $nx^2F_n(x)-ax\frac{dF_{n}(x)}{dx}  =0$.
This gives
\begin{equation*}
	F_{n}(x)=C_n e^{\frac{n x^2}{2 a}},
\end{equation*}
where $C_n$ is a constant. Since $F_{n}(x)$ is a polynomial, this implies that $n=0$. Therefore, for $n\geq 1$, then $H_n=0$. Let $H=H_0(x,y)$, we substitute it in (\ref{i1}), and computing the coefficient in (\ref{i1}) of $z^i$ for $i=0,1$. We obtain

for $i=1$: $y\frac{\partial}{\partial y} H_{0}(x,y)=0$, that is $H_0(x,y)=F_0(x)$, where $F_0(x)$ is a polynomial of the variable $x$.
	
for $i=0$: $y\frac{\partial}{\partial y} F_{0}(x)=0$, that is $F_0(x)=C$, where $C$ is a constant.\\
Hence, $H(x,y,z)=C$. So, system (\ref{1ybig}) has no polynomial first integrals.
\end{proof}

Now, we will start the proof of Theorem \ref{INT-S1-big}(2). We recall that a Darboux polynomial of the system (\ref{1ybig}) is a non-constant polynomial $f\in \mathbb{C}[x,y,z]$ such that
\begin{equation}\label{dar}
	y\frac{\partial f}{\partial x}+(-ax+yz)\frac{\partial f}{\partial y}+(by-cxy-x^2)\frac{\partial f}{\partial z}=Kf \,.
\end{equation}
For some polynomial $K=k_0+k_1 x+k_2 y+k_3 z$, where $k_i\in \mathbb{C}$ for $i=0,1,2,3$. Firstly, we want to show that $k_0=k_1=k_2=0$
\begin{lemma}\label{lem-k2}
	$k_2=0$.
\end{lemma}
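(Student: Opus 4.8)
The plan is to mimic the degree/leading-term analysis already used successfully in the proof of Theorem \ref{INT-S1-big}(1), but now applied to the defining equation \eqref{dar} for a Darboux polynomial $f$ with cofactor $K=k_0+k_1x+k_2y+k_3z$. First I would expand $f$ as a polynomial in the variable $z$, writing $f=\sum_{i=0}^{n}f_i(x,y)\,z^i$ with $f_n\not\equiv 0$, and substitute into \eqref{dar}. The left-hand side has $z$-degree at most $n+1$ (the only place $z^{n+1}$ can appear is from the term $yz\,\partial f/\partial y$ acting on $f_n z^n$), while the right-hand side $Kf$ has $z$-degree exactly $n+1$ with leading coefficient $k_3 f_n$. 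Comparing the coefficients of $z^{n+1}$ gives the identity $y\,\partial f_n/\partial y = k_3 f_n$, which is the standard relation forcing $f_n(x,y)=g(x)\,y^{k_3}$ for a nonnegative integer $k_3$ and some polynomial $g$; in particular this already pins down $k_3$ as a nonnegative integer and isolates the dependence of the top coefficient on $y$.

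Next I would compare the coefficient of $z^{n}$ in \eqref{dar}. Collecting all contributions — $y\,\partial f_{n}/\partial x$ from the $y\partial_x$ term, $(-ax)\,\partial f_n/\partial y + y\,\partial f_{n-1}/\partial y\cdot$(from the $yz$ part) together with the $n$-fold factor coming from differentiating $z^n$, and the $z^0$ part of the last term — one obtains a linear PDE for $f_{n-1}(x,y)$ of the form $y\,\partial f_{n-1}/\partial y - k_3 f_{n-1} = (\text{explicit expression in } f_n,\ k_0,k_1,k_2)$. The key point is that on the right-hand side the terms $k_2 y f_n$ and the terms produced by $(by-cxy)\cdot n f_n/y$-type contributions, once $f_n = g(x)y^{k_3}$ is inserted, force a monomial in $y$ that cannot be matched by the left-hand operator $y\partial_y - k_3$ unless its coefficient vanishes — because $y\partial_y-k_3$ annihilates exactly $y^{k_3}$ and is invertible on every other power of $y$, so any genuine $y^{k_3}$ term on the right must be zero. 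Isolating that obstruction term should yield an equation whose only solution is $k_2=0$; this mirrors exactly the step in the proof of part (1) where a logarithm (equivalently, a resonant monomial) appeared and had to be killed.

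The main obstacle I anticipate is bookkeeping: correctly assembling every term that contributes to the coefficient of $z^n$ (and possibly $z^{n-1}$, if the $z^n$-comparison alone is not decisive and one needs to descend one more level as in part (1)), and then carefully tracking which power of $y$ each term carries after substituting $f_n=g(x)y^{k_3}$. A secondary subtlety is the degenerate case $k_3=0$: then $f_n=g(x)$ depends only on $x$, the operator $y\partial_y$ has kernel spanned by all $y$-free polynomials, and one must check separately that the same resonance argument still forces $k_2=0$; I expect this to follow because the offending term is then a pure power $y^{1}$ (from $k_2 y f_n$ or from $b n f_n$) which again is not in the image of $y\partial_y$ acting on polynomials with no constant-in-$y$ obstruction. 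Once $k_2=0$ is established, the companion lemmas $k_0=k_1=0$ will follow from the analogous comparison at lower $z$-degree and lower $(x,y)$-degree, after which the cofactor is reduced to $K=k_3 z$ and the explicit classification $f=y$, $k_3=1$, $a=0$ can be extracted.
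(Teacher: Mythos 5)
Your overall strategy --- expand $f$ in powers of $z$ and exploit the resonance structure of the operator $y\,\partial_y-k_3$ --- is a genuinely different route from the paper, which expands $f$ in powers of $y$ instead. Unfortunately the key step of your plan fails: the term you identify as the obstruction to $k_2\neq 0$ is not resonant. Writing $f=\sum_{i=0}^{n}f_i(x,y)z^i$, the coefficient of $z^{n+1}$ in \eqref{dar} does give $y\,\partial_y f_n=k_3f_n$, hence $f_n=g(x)y^{k_3}$ with $k_3\in\mathbb{Z}^+\cup\{0\}$. But at the next level the $\partial_z$ term of the vector field lowers the $z$-degree, so it does not enter the $z^n$ equation at all, and that equation reads
\begin{equation*}
y\,\partial_y f_{n-1}-k_3f_{n-1}=(k_0+k_1x)g(x)\,y^{k_3}+\bigl(k_2g(x)-g'(x)\bigr)y^{k_3+1}+ak_3xg(x)\,y^{k_3-1}.
\end{equation*}
The operator $y\,\partial_y-k_3$ sends $y^m$ to $(m-k_3)y^m$, so the only resonant monomial on the right is the $y^{k_3}$ term, whose vanishing forces $k_0=k_1=0$ (this is exactly the paper's Lemma \ref{lem-k0k1}, where the obstruction shows up as a $\ln(y)$ term). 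The term $k_2g(x)y^{k_3+1}$ that you single out sits in degree $k_3+1\neq k_3$, so it is absorbed harmlessly into $f_{n-1}$ and imposes no condition on $k_2$. Descending to $z^{n-1}$ does not rescue the argument in general either: the resonance condition there works out to $axg'=(ak_3+ak_2x+nx^2)g$, which constrains $k_2$ only when $a\neq0$, and $k_2$ drops out entirely when $a=0$ --- precisely the case where a nontrivial Darboux polynomial actually exists.

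The structural reason is that $k_2$ multiplies $y$ in the cofactor, so it is detected by the \emph{top coefficient in $y$}, not in $z$. This is what the paper does: writing $f=\sum_{i=0}^{n}f_i(x,z)y^i$, the coefficient of $y^{n+1}$ in \eqref{dar} yields the transport equation $\partial_xf_n+(b-cx)\partial_zf_n=k_2f_n$, whose nonzero solutions are $G_n\!\left(\tfrac{1}{2}cx^2-bx+z\right)e^{k_2x}$ and hence are never nonzero polynomials unless $k_2=0$; a short downward induction (and the same equation for $f_0$ when all higher $f_i$ vanish) then finishes the lemma. Your $z$-expansion is the right tool for Lemma \ref{lem-k0k1} and for the later weight-homogeneous analysis, but for $k_2$ you need to switch the expansion variable to $y$.
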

\begin{proof}
We write $f$ as the form $f(x,y,z)=\sum_{i=0}^{n}f_i(x,z) y^i$, where each $f_i$ is a polynomial of the variables $x,z$. Computing the coefficient in (\ref{dar}) of term $y^{n+1}$. We have
$$\frac{\partial}{\partial x}f_n(x,z)+(b-cx)\frac{\partial}{\partial z}f_n(x,z)-k_2 f_n(x,z)=0 . $$
Solving the above equation, we obtain
$f_n (x,z)=G_n (\frac{1}{2} cx^2-bx+z)  e^{k_2 x}$, where $G_n$ is an arbitrary polynomial of the variable $x$ and $z$. Since $f_n$ is a polynomial then must be $G_n=0$ or $k_2=0$. Now, suppose that $G_n=0$ and $k_2\neq 0$. Consequently, we obtain $f_n=0$ for $n\ge 1$. Thus $f=f_0 (x,z)$. We substitute it in (\ref{dar}) and compute the terms of $y$, we obtain
$$\frac{\partial}{\partial x}f_0(x,z)+(b-cx)\frac{\partial}{\partial z}f_0(x,z)-k_2 f_0(x,z)=0.$$
Then, $f_0 (x,z)=G_0 \left( \frac{1}{2} cx^2-bx+z \right)  e^{k_2 x}$ for an arbitrary function $G_0$ of the variables $x$ and $z$. If $k_2 \ne 0$, we get a contradiction with the fact that $f_0$ must be non-constant polynomial. Hence, $k_2=0$. This complete the proof of the lemma.
\end{proof}

\begin{lemma}\label{lem-k0k1}
	$k_0=k_1=0$ and $k_3 \in \mathbb{Z}^+$.
\end{lemma}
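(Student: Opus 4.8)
The plan is to expand $f$ in powers of $z$ and read off the highest-order coefficients of the Darboux equation (\ref{dar}), exactly as in the proof of Lemma \ref{lem-k2}. Write $f(x,y,z)=\sum_{i=0}^{n} f_i(x,y)\, z^i$ with $f_n\not\equiv 0$, and recall from Lemma \ref{lem-k2} that $k_2=0$, so the cofactor is $K=k_0+k_1 x+k_3 z$. Comparing the coefficients of $z^{n+1}$ in (\ref{dar}) gives $y\,\partial_y f_n=k_3 f_n$; writing $f_n=\sum_j c_j(x)\,y^j$ this forces $(j-k_3)\,c_j(x)\equiv 0$ for every $j$, and since $f_n\not\equiv 0$ this makes $k_3$ a nonnegative integer with $f_n(x,y)=\phi_n(x)\,y^{k_3}$ for some nonzero polynomial $\phi_n$.

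Next I would compare the coefficients of $z^n$ in (\ref{dar}); here the term $(by-cxy-x^2)\,\partial_z f$ does not contribute, since $\partial_z f$ has $z$-degree $n-1$. The resulting identity is
$$y\,\partial_y f_{n-1}-k_3 f_{n-1}=(k_0+k_1 x)\,f_n-y\,\partial_x f_n+a x\,\partial_y f_n,$$
with the convention $f_{-1}\equiv 0$ when $n=0$. The key observation is that the left-hand side, viewed as a polynomial in $y$, has vanishing coefficient of $y^{k_3}$, because the operator $y\partial_y-k_3$ sends $\sum_j c_j(x)\,y^j$ to $\sum_j(j-k_3)\,c_j(x)\,y^j$ and hence kills the $j=k_3$ term. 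On the right-hand side, substituting $f_n=\phi_n(x)\,y^{k_3}$ shows that $-y\,\partial_x f_n$ and $a x\,\partial_y f_n$ only involve the powers $y^{k_3+1}$ and $y^{k_3-1}$, so the $y^{k_3}$-coefficient of the right-hand side is precisely $(k_0+k_1 x)\,\phi_n(x)$. Equating the two coefficients gives $(k_0+k_1 x)\,\phi_n(x)\equiv 0$, and since $\phi_n\not\equiv 0$ we obtain $k_0=k_1=0$.

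It then remains to exclude $k_3=0$. With $k_0=k_1=0$ the cofactor is $K=k_3 z$, and $k_3=0$ would mean $K\equiv 0$, i.e. $f$ is a non-constant polynomial first integral of system (\ref{1ybig}); this contradicts Theorem \ref{INT-S1-big}(1). Hence $k_3\in\mathbb{Z}^+$, as claimed.

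The only delicate aspect is the bookkeeping: keeping careful track of exactly which powers of $z$ and of $y$ appear in each piece of (\ref{dar}) when extracting the $z^{n+1}$ and $z^n$ coefficients (in particular verifying that the $\partial_z$-term drops out at level $z^n$). Once the $z$-expansion is set up correctly, the structural fact that carries the argument, namely that the operator $y\partial_y-k_3$ annihilates the $y^{k_3}$-component, is immediate, and I do not anticipate any genuine obstacle beyond routine computation.
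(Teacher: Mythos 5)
Your proof is correct, and it follows the same basic strategy as the paper: expand $f=\sum_i f_i(x,y)z^i$, use the $z^{n+1}$ coefficient to get $f_n=\phi_n(x)y^{k_3}$ with $k_3\in\mathbb{Z}^+\cup\{0\}$, and then mine the $z^n$ coefficient for the vanishing of $k_0$ and $k_1$. The execution differs in two ways worth noting. First, at the $z^n$ level the paper actually integrates the equation for $f_{n-1}$, obtains a $\ln(y)$ term with coefficient $(k_0+k_1x)F_n(x)$, and concludes that either $F_n=0$ or $k_0=k_1=0$; it must then dispose of the sub-case $F_n=0$ (which collapses $f$ to $f_0(x,y)$ and leads to a non-polynomial exponential solution). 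You avoid this case split entirely by normalizing $f_n\not\equiv 0$ (i.e.\ taking $n=\deg_z f$), so that $\phi_n\neq 0$ and the identity $(k_0+k_1x)\phi_n\equiv 0$ immediately forces $k_0=k_1=0$; moreover your extraction of the $y^{k_3}$ coefficient via the observation that $y\partial_y-k_3$ annihilates the $y^{k_3}$ component is cleaner than solving the ODE and invoking polynomiality to kill the logarithm. Second, the paper's proof asserts $k_3\in\mathbb{Z}^+$ (rather than $\mathbb{Z}^+\cup\{0\}$) without making the exclusion of $k_3=0$ explicit; your appeal to Theorem \ref{INT-S1-big}(1) --- that $k_3=0$ would make $f$ a non-constant polynomial first integral --- is a legitimate, non-circular way to close that gap, since part (1) is established before the lemma. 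Both differences are improvements in rigor and economy rather than a change of method.
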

\begin{proof}
	From Lemma \ref{lem-k2}, we can consider $k_2=0$. We write $f=\sum_{i=1}^n f_i (x,y) z^i$, where each $f_i$ is a polynomial of the variables $x,y$. Computing the coefficient in (\ref{dar}) of the terms $z^{n+1}$, we obtain
	$$y \frac{\partial f_n(x,y)}{\partial y}-k_3 f_n(x,y)=0 .$$
Solving the above equation, we obtain $f_n(x,y)=F_n(x) y^{k_3}$.
Since $f_n (x,y)$ is a polynomial, it is possible $k_3\in \mathbb{Z}^+\cup\{0\}$. Computing also the terms of $z^n$ in (\ref{dar}), we obtain
\begin{equation*}
y \frac{\partial f_{n-1} (x,y)}{\partial y}-k_3 f_{n-1}(x,y) + y^{k_3+1}  \frac{dF_n(x)}{dx}-(k_0+k_1 x) F_n(x) y^{k_3}-ak_3 xy^{k_3-1} F_n(x)=0 .
\end{equation*}
Solving the above equation with respect to $f_{n-1}$, we obtain
\begin{equation*}
f_{n-1}(x,y)=\left( (k_0+k_1 x) \ln(y) F_n(x) -y\frac{dF_n(x)}{dx} - \frac{ak_3 x F_n(x)}{y} +F_{n-1}(x) \right) y^{k_3},
\end{equation*}
where $F_{n-1}(x)$ is a polynomial of $x$. Since $f_{n-1}$ is a polynomial, then $F_n (x)=0$ or $k_0=k_1=0$ and $k_3\in \mathbb{Z}^+$. Now, suppose that $F_n (x)=0$, $k_0\neq0$ and $k_1\neq0$. This gives that $f_n (x,y)=0$ for $n\ge 1$, this implies that $f=f_0 (x,y)$.
Then from (\ref{dar}) and computing the terms of $z^i$ for $i=1,0$, we have
\begin{equation}\label{dz1}
y \frac{\partial f_{0} (x,y)}{\partial y}-k_3 f_{0}(x,y)=0,
\end{equation}
\begin{equation}\label{dz0}
y\frac{\partial f_0(x,y)}{\partial x} - ax \frac{\partial f_0(x,y)}{\partial y} -(k_0+k_1 x)f_0(x,y) = 0.
\end{equation}
Solving the equation (\ref{dz1}), we obtain $f_0 (x,y)=F_0 (x)  y^{k_3}$. By substituting $f_0$ in equation (\ref{dz0}), we obtain 
\begin{equation*}
y^{k_3+1} \frac{d F_0(x)}{d x} - a k_3 x y^{k_3-1} -(k_0+k_1 x)F_0(x)=0.	
\end{equation*}
That is
$F_0(x)=Ce^{\frac{ak_3 x^2+k_1 x^2 y+2k_0 xy}{2y^2}}$,
where $C$ is an arbitrary constant. This contradict with the fact that $f_0$ is a polynomial of the variable $x$ and $y$. This implies that $k_0=k_1=0$, and $k_3\in \mathbb{Z}^+$. This completes the proof of the lemma. 
\end{proof}
Next, we demonstrate the proof of Theorem  \ref{INT-S1-big} (2).

\begin{proof}[Proof of Theorem \ref{INT-S1-big} (2)]
From Lemmas \ref{lem-k2} and \ref{lem-k0k1}, we can write $K$ as $k_3 z$, $k_3 \in \mathbb{Z}^+$. Now, We choose the change of variables $x=X$, $y=Y$, $z=\lambda^{-1}Z$, $t=\lambda T$. Then, system (\ref{1ybig}) becomes
\begin{equation} \label{w1}
\dot{X} =\lambda Y \, ,\quad
\dot{Y} =-a\lambda X+YZ \, , \quad
\dot{Z} =b \lambda^2 Y- c\lambda^2 XY - \lambda^2 X^2 , 
\end{equation}
where the dot represents derivative with respect to the variable $T$. Set $F(X,Y,Z)=\lambda^n f(X, Y, \lambda^{-1} Z)$ and $K(X, Y, Z) = \lambda K(X, Y, \lambda^{-1}Z)=k_3 Z$, where $n$ denotes the highest weight degree in the weight homogeneous components of $f$ in the variables $(x, y, z)$ with weight degree $(0, 0, 1)$.
	
Assume that $F(\lambda,X,Y,Z)=\sum_{i=0}^{n}\lambda^i F_{n-i}(X,Y,Z)$, where $F_{j}$ is a weight homogeneous polynomial of the variables $X$, $Y$ and $Z$ with weight degree $j$ for $j=0, 1,\ldots, n$. 
From the definition of Darboux polynomial, we have
\begin{equation}\label{w2}
\lambda Y \sum_{i=0}^{n}\lambda^i \frac{\partial F_{n-i}}{\partial X}
+(-a\lambda X+YZ) \sum_{i=0}^{n}\lambda^i \frac{\partial F_{n-i}}{\partial Y}                     
+(b \lambda^2 Y - c\lambda^2 XY - \lambda^2 X^2) \sum_{i=0}^{n}\lambda^i \frac{\partial F_{n-i}}{\partial Z} 	=(k_3 Z) \sum_{i=0}^{n}\lambda^i F_{n-i} \, .
\end{equation}
Computing the terms with $\lambda^0$ in (\ref{w2}), we get
$YZ \frac {\partial F_n(X,Y,Z)}{\partial Y} - k_3 Z F_{n} (X,Y,Z)=0$. That is $F_n(X,Y,Z)=H_n(X,Z) Y^{k_3}$, where $H_n(X,Z)$ is a polynomial of the variable $X$ and $Z$. Since $F_n$ is a homogeneous polynomial of weight degree $n$, we can assume that $H_n=W_n(X)Z^n$ with $W_n$ is an arbitrary polynomial of the variable $X$.
	
Computing the coefficient of $\lambda^1$ in (\ref{w2}), we obtain
\begin{equation*}
	YZ \frac {\partial F_{n-1}(X,Y,Z)}{\partial Y} - k_3 Z F_{n-1} (X,Y,Z)+Z^n Y^{k_3+1}\frac{d W_n(X)}{dX}-ak_3XZ^nY^{k_3-1} W_n(X)=0 .
\end{equation*}
Solving the above equation, we obtain
\begin{equation*}
	F_{n-1}(X,Y,Z)=-Z^{n-1}\left( a k_3 X W_n(X)Y^{k_3-1}+Y^{k_3+1}\frac{d W_n(X)}{dX}\right) +H_{n-1}(X,Z) Y^{k_3}.
\end{equation*}
Since $F_{n-1}$ is a homogeneous polynomial of weight degree $n-1$, we can assume that  $H_{n-1}=W_{n-1}(X)Z^{n-1}$, where $W_{n-1}$ is an arbitrary polynomial of the variable $X$.

Computing the coefficient of $\lambda^2$ in (\ref{w2}), we obtain
\begin{equation*}
\begin{split}
&YZ\frac {\partial F_{n-2}(X,Y,Z)}{\partial Y} -k_3ZF_{n-2}(X,Y,Z) +\left( (nb-nXc) W_n(X) + \left( \frac {d W_{n-1}(X)}{dX} -ak_3XW_{n-1}(X)  \right) \right) Z^{n-1} Y^{k_3+1} \\
& +\left(  (-n{X}^2-ak_3) W_n(X) + \left( aX{\frac {d W_n(X)}{dX}}  - Y^2{\frac {d^2 W_n(X)}{dX^2}}  \right) \right)  Z^{n-1} Y^{k_3}  + ({k_3}-1) a^2k_3W_n(X) X^2 Z^{n-1} {Y}^{k_3-2}= 0 \,.
	\end{split}
\end{equation*}
That is
\begin{equation}\label{eq Fn-2}
	\begin{split}
F_{n-2}(X,Y,Z)=& H_{n-2}(X,Z) Y^{k_3} +Z^{n-2}Y^{k_3} \ln (Y)  \left( ( a{k_3}+nX^2) W_n(X) -  aX\frac {d W_n(X)}{dX}  \right)  \\
&  +{Z}^{n-2} \left(  Y^{k_3-2} \left( \frac{1}{2} (k_3-1)k_3 a^2 X^2 W_n(X) \right) -  a k_3 X Y^{k_3-1}W_{n-1}(X)  \right) \\
&+{Z}^{n-2} Y^{k_3+1} \left( n(cX - b)W_n(X)  + \frac{1}{2} Y\frac {d^2 W_n(X)}{dX^2}  - \frac{d W_{n-1}(X)}{dX} \right) ,
	\end{split}
\end{equation}
where $H_{n-2}(X,Z)$ is a polynomial of the variables $X$ and $Z$. For $F_{n-2}(X,Y,Z)$ to be a polynomial, we consider two cases;

\textbf{Case 1.} if $n=a=0$ and $W_n(X)\ne 0$, then equation (\ref{eq Fn-2}) becomes
\begin{equation*}
F_{n-2}(X,Y,Z)=\frac{Y^{k_3+2}}{2 Z^2}\left( \frac{d^2 W_n(X)}{d X^2}- \frac{d W_{n-1}(X)}{d X} \right) + H_{n-2}(X,Z) Y^{k_3}.
\end{equation*}
Which is impossible, because $F_{n-2}$ is a polynomial.
	
\textbf{Case 2.} If $W_n(X)=0$, we can infer from equation (\ref{eq Fn-2}) that
\begin{equation*}
F_{n-2}(X,Y,Z)=-Z^{n-2}\left( a k_3 X Y^{k_3-1} W_{n-1}(X) + Y^{k_3+1}\frac{d W_{n-1}(X)}{d X} \right) + H_{n-2}(X,Z) Y^{k_3}.
\end{equation*}
Since $F_{n-2}$ is a polynomial of weight degree $n-2$, we can assume that $H_{n-2}(X,Z)=W_{n-2}(X) Z^{n-2}$. 

Similarly to the above process. By computing the coefficient of $\lambda^i$ for $i=3\ldots n-1$ in (\ref{w2}), we can derive $W_{n-1}(X)=\ldots=W_{3}=0$ and 
\begin{equation*}
\begin{split}
&F_{n-3}(X,Y,Z)=-Z^{n-3}\left( a k_3 X Y^{k_3-1} W_{n-2}(X) + Y^{k_3+1}\frac{d W_{n-2}(X)}{d X} \right) + H_{n-3}(X,Z) Y^{k_3},	\\
&. \\
&. \\
&F_{1}(X,Y,Z)=-Z \left( a k_3 X Y^{k_3-1} W_{2}(X) + Y^{k_3+1}\frac{d W_{2}(X)}{d X} \right) + H_{1}(X,Z) Y^{k_3} .
\end{split}
\end{equation*}
Since, $F_j$ is a polynomial of weight degree $j$, we can assume that $H_{j}(X,Z)=W_{j}(X) Z^{j}$, for $j=n-3, \ldots ,1$. 

Now, computing the coefficient of $\lambda^n$ in (\ref{w2}), we obtain
\begin{equation*}
\begin{split}
&YZ  {\frac {\partial F_0(X,Y,Z)}{\partial Y}}  - k_3 Z F_0(X,Y,Z) + ak_3Z\left( a(k_3-1)X^2Y^{k_3-2} W_2(X)  -  XY^{k_3-1} W_1(X) \right) \\
& +ZY^{k_3} \left( (a k_3 -2X^2)W_2(X)+aX  \frac {d W_2(X)}{dX}+2(b-cX) Y W_2(X)  +Y \frac {dW_1(X)}{dX} - Y^2  \frac {d^2 W_2(X)}{dX^2}
  \right) =0.
\end{split}
\end{equation*} 
Solving the above equation, we obtain
\begin{equation*}
\begin{split}
F_0(X,Y,Z)=& H_0(X,Z) Y^{k_3}+\left( \frac{1}{2} Y^2 \frac {d^2 W_2(X)}{dX^2}  -2(b-cX)YW_2(X) - Y\frac {d W_1(X)}{dX}  \right) Y^{k_3} \\
+&\ln (Y) \left( -aX\frac {dW_2(X)}{dX} - (a k_3 -2X^2)W_2(X) \right) Y^{k_3} 
-ak_3\left( \frac{a (k_3-1)X^2 W_2(X) }{2Y^2}
-\frac{X W_1(X)}{Y}  \right)Y^{k_3},
\end{split}
\end{equation*}
where $H_{0}(X,Z)$ is a polynomial of the variables $X$ and $Z$.
Since $F_{0}(X,Y,Z)$ is a polynomial, this required that $W_2(X)=0$, which implies that
\begin{equation}\label{F1}
F_1(X,Y,Z)=W_1(X)  Z Y^{k_3},
\end{equation}
\begin{equation}
F_0(X,Y,Z)=-\frac{d W_1(X)}{d X}Y^{k_3+1} - a k_3 X W_1(X) Y^{k_3-1} + H_0(X, Z) Y^{k_3}.
\end{equation}
Since $F_0$ is a polynomial of weight degree $0$, we can assume that $H_{0}(X,Z)=W_{0}(X)$, which is a polynomial for $X$ only. Lastly, computing the coefficient of $\lambda^{n+1}$ in (\ref{w2}), we obtain
\begin{equation}\label{np1}
\begin{split}
&\left( (-ak_3-X^2)+(b-cX)Y \right) W_1(X) Y^{k_3} + \left( aX \frac {dW_1(X)}{dX}  - Y^2 \frac {d^2W_1(X)}{dX^2} +Y \frac {dW_0(X)}{dX}   \right) Y^{k_3} \\
&+\left(  \frac{a^2{k_3}({k_3} - 1)  X^2 W_1(X)}{Y^2}  - \frac{ak_3XW_0(X)}{Y} \right) Y^{k_3}  =0.
\end{split}
\end{equation}
The above equation is a polynomial of variables $X$ and $Y$, computing the coefficients in (\ref{np1}) of terms $Y^{k_3+2},Y^{k_3+1}, Y^{k_3}$ and $Y^{k_3-1}$ respectively, we obtain
\begin{equation}\label{Yp2}
	-\frac{d^2}{dX^2}W_1(X)=0,
\end{equation}
\begin{equation}\label{Yp1}
	\frac{d}{dX}W_0(X)+(b-cX)W_1(X)=0,
\end{equation}
\begin{equation}\label{Yp0}
	aX\frac{d}{dX}W_1(X)-(X^2 +k_3)W_1(X)=0,
\end{equation}
\begin{equation}\label{Ym1}
	-a k_3 X W_0(X)=0.
\end{equation}
Solving the equation (\ref{Yp2}) for $W_1(X)$, we can write $W_1(X)=d_1 X+d_0$, where $d_1$ and $d_0$ are arbitrary constant. Substituting $W_1(X)=d_1 X+d_0$ in equation (\ref{Yp0}), we obtain
\begin{equation}
-d_1 X^3 -d_0 X^2 + ad_1(1-k_3)X-ad_0k_3=0.
\end{equation}
One possibility that the above polynomial becomes zero is that $d_1=d_0=0$. This implies that $W_1(X)=0$, $F_1(X,Y,Z)=0$ and $F_0(X,Y,Z)=W_0(X) Y^{k_3}$. Substituting $W_1(X)=0$ in the equation (\ref{Yp1}) and solve it for $W_0(X)$, we obtain $W_0(X)=C_0$, where $C_0$ is a constant. This gives that from equation (\ref{Ym1}), $-a C_0 k_3 X=0$. Since $k_3>0$ and $C_0\ne0$, then $a=0$, thus, $F_0(X,Z,Y)=C_0 Y^{k_3}$. 

To sum up, From $F(\lambda,X,Y,Z)=\sum_{i=0}^{n}\lambda^i F_{n-i}(X,Y,Z)$, we have $F= C_0 Y^{k_3}$. This conclude that, $f=y^{k_3}$ is Darboux polynomial with cofactor $K=k_3 z$. This completes the proof of Theorem \ref{INT-S1-big} (2).
\end{proof}

\begin{proof}[Proof of Theorem \ref{INT-S1-big} (3)]
	The proof comes directly from Theorem \ref{INT-S1-big} (1) and (2) with Lemma \ref{THration1}.
\end{proof}

\begin{proof}[Proof of Theorem \ref{INT-S1-big} (4)]
Let $E=e^{\frac{g}{h}}$ be an exponential factor of the system (\ref{1ybig}) with cofactor $L=l_0+l_1 x+l_2 y+l_3 z$, where $g,h \in \mathbb{C}[x,y,z]$ with $g$ and $h$ are relatively prime and $l_i\in \mathbb{C}$ for $i=0,1,2,3$. To give the complete proof, we consider two cases;

\textbf{Case (1):} If $a\ne 0$, then from Theorem \ref{INT-S1-big}(2) and Proposition \ref{propo exp}, $h$ is a constant (let say $h=1$). Thus, $E=e^{g}$, then we have
\begin{equation}\label{e1}
y \frac{\partial e^g}{\partial x}+(-ax+yz)  \frac{\partial e^g}{\partial y}+(by-cxy-x^2 )  \frac{\partial e^g}{\partial z}=Le^g .
\end{equation}
Simplifying
\begin{equation}\label{e2}
y \frac{\partial g}{\partial x}+(-ax+yz)  \frac{\partial g}{\partial y}+(by-cxy-x^2 )  \frac{\partial g}{\partial z}=L, 
\end{equation}
Let $g$ be $g(x,y,z)=\sum_{i=0}^n g_i (x,y)  z^i $, where each $g_i$ is a polynomial of the variables $x$ and $y$. Firstly, we consider $n\ge2$.

Now, computing the coefficient of $z^{n+1}$ in (\ref{e2}), we obtain
$y \frac{\partial g_n (x,y)}{\partial y}=0$. That is $g_n(x,y)=G_n (x)$, where $G_n(x)$ is a polynomial of the variable $x$. Computing also the coefficient of $z^n$ in (\ref{e2}), we obtain
$$y \frac{\partial g_{n-1}(x,y)}{\partial y}+y \frac{dG_n(x)}{dx}=0.$$
That is $g_{n-1} (x,y)=-y  \frac{dG_n(x)}{dx}+G_{n-1}(x)$, 
where $G_{n-1}(x)$ is an arbitrary polynomial of the variable $x$. Next, computing the coefficient in (\ref{e2}) of $z^{n-1}$, we obtain
\begin{equation*}
y \frac{\partial g_{n-2} (x,y)}{\partial y}+ax \frac{dG_n(x)}{dx}+ y \frac{dG_{n-1}(x)}{dx} - y^2 \frac{d^2G_n(x)}{dx^2}  +n(b-cx)yG_n (x)-nx^2 G_n (x) =0. 
\end{equation*}
We can solve the above equation for $g_{n-2} (x,y)$, we obtain
\begin{equation*}
g_{n-2}(x,y)= \frac{1}{2} \,y^2 \frac{d^2G_n(x)}{dx^2} - y \frac{dG_{n-1}(x)}{dx} +(-b+cx)nyG_n(x)
+\ln(y) \left( nx^2 G_n (x) -ax \frac{dG_n(x)}{dx} \right) +G_{n-2}(x),
\end{equation*}
where $G_{n-2}(x)$ is a polynomial of the variable $x$. Since $g_{n-2} (x,y)$ is a polynomial and $a\neq 0$, it is required that $G_n (x)=0$. This implies that $g_n=0$ for $n\geq 2$. Hence, we have $g(x,y,z)=g_0 (x,y)+g_1 (x,y)z$. The equation (\ref{e2}) becomes
\begin{equation}\label{e3}
(yz^2-axz) \frac{\partial g_1(x,y)}{\partial y} +yz \frac{\partial g_1(x,y)}{\partial x} +(yz-ax) \frac{\partial g_0(x,y)}{\partial y} +y \frac{\partial g_0(x,y)}{\partial x}  +(by-x^2-cxy)g_1(x,y)=L.
\end{equation}
Compute the coefficients in (\ref{e3}) of $z^2$,$z^1$ and $z^0$ respectively, we obtain the following differential equations
\begin{equation}\label{e31}
y\frac{\partial g_1(x,y)}{\partial y}=0,
\end{equation}
\begin{equation}\label{e32}
-ax\frac{\partial g_1(x,y)}{\partial y} + y\frac{\partial g_0(x,y)}{\partial y} + y\frac{\partial g_1(x,y)}{\partial x}-l_3=0,
\end{equation}
\begin{equation}\label{e33}
-ax \frac{\partial g_0(x,y)}{\partial y} +y \frac{\partial g_0(x,y)}{\partial x} +(by-x^2-cxy)g_1(x,y)-l_0-l_1 x-l_2 y=0.
\end{equation}
Solve the equation (\ref{e31}) for $g_1 (x,y)$, we obtain $g_1 (x,y)=G_1 (x)$, where $G_1 (x)$ is a polynomial of the variable $x$. Substituting $g_1=G_1(x)$ in (\ref{e32}), we obtain
\begin{equation*}
g_0(x,y)=- y\frac{d G_1(x)}{d x}+l_3 \ln(y)+G_0(x),
\end{equation*}
where $G_0 (x)$ is a polynomial of the variable $x$. Since $g_0 (x,y)$ is a polynomial, then $l_3=0$.

Therfore, equation (\ref{e33}) can be simplified into
\begin{equation*}
y \frac{d G_0(x)}{d x} +\left( (b-cx)G_1(x)-l_2 \right) y-y^2 \frac{d^2 G_1(x)}{d x^2} + \left( ax \frac{d G_1(x)}{d x} -x^2G_1(x)-l_0-l_1 x \right) =0.
\end{equation*}
Computing the coefficients of $y^i$ for $i=2,1,0$ respectively, we can derive the following:
\begin{equation}\label{ex31}
-\frac{d^2 G_1(x)}{dx^2}=0,
\end{equation}
\begin{equation}\label{ex32}
\frac{d G_0(x)}{dx}+(b-cx)G_1(x)-l_2=0,
\end{equation}
\begin{equation}\label{ex33}
a x \frac{d G_1(x)}{dx} -x^2 G_1(x)-l_1 x-l_0=0.
\end{equation}
Solve the equation (\ref{ex31}) for $G_1(x)$, we obtain $G_1(x)=a_1x+a_0$, where $a_1,a_0$ are arbitrary constant. Then, the equation (\ref{ex33}) becomes
$$-a_1 x^3+ a_0 x^2 +(a a_1 -l_1)x-l_0=0.$$
Since $a\ne 0$, it is required that $a_1=a_0=l_1=l_0=0$. Now, solving equation (\ref{ex32}) for $G_0$, we obtain $G_0(x)=l_2 x+c_0$, where $c_0$ is constant. As a result, we have $g=g_0=l_2 x+c_0$, that is $e^{l_2 x+c_0}$ is an exponential factor with cofactor $L=l_2 y$.

\textbf{Case (2):} When $a=0$, then according to Proposition \ref{propo exp} and Theorem \ref{INT-S1-big} (2), the exponential factors of system (\ref{1ybig}) can be expressed as $E=e^{\frac{g}{y^m}}$ for some non-negative integer $m$,  where $g\in \mathbb{C}[x, y, z]$, in which $g$ and $y^m$ are relatively prime. By definition of the exponential factor, directly, we have 
\begin{equation}\label{ex2}
	y \frac{\partial g}{\partial x}+yz  \frac{\partial g}{\partial y}+(by-cxy-x^2)  \frac{\partial g}{\partial z}-mzg=L y^m .
\end{equation}
Now, we consider two cases;

\textbf{Case I:} For $m \ge 1$, the restriction of $g$ to $y=0$ is denoted as $\acute{g}$ is the polynomial, defined by $g(x,y,z)| _{y=0} =\acute{g}$. Then equation (\ref{ex2}), becomes
\begin{equation}\label{ex3}
-{x}^{2}{\frac {\partial \acute{g}}{\partial z}} -  mz\acute{g}=0.
\end{equation}
Let $\acute{g}(x,z)=\sum_{i=0}^{n}\acute{g}_i(x,z)$, where each $\acute{g}_i$ is a homogeneous polynomial of degree $i$ of the variables $x$ and $z$. 

By computing the terms of degree $n+1$ from (\ref{ex3}), we obtain
\begin{equation}
-{x}^2{\frac {\partial \acute{g}_{n}(x,z)}{\partial z}} -mz\acute{g}_{n}(x,z) =0.
\end{equation}
Solving the above equation for $\acute{g}_n$, we obtain
\begin{equation}
\acute{g}_{n}(x,z) =G_n(x) e^{-\frac{mz^2}{2x^2}},
\end{equation}
Since $\acute{g}_{n}(x,z)$ is a polynomial and $m\ge 1$, this gives that $G_n(x)=0$. Then , $\acute{g}_{i}(x,z) = 0$ for each $i=0,\ldots,n$, thus $\acute{g}(x,z) = 0$. This case can not be taken.
	
\textbf{Case II:} For $m=0$, we have $E=e^g$, where $g\in \mathbb{C}[x, y, z]$. Setting $a = 0$, in Theorem \ref{INT-S1-big} (4) Case (1), We obtain $e^{g(x,y,z)}=e^{l_2 x + c_0}$ with the cofactor $L = l_2 y$. 
This completes the proof of Theorem \ref{INT-S1-big} (4).
\end{proof}

\begin{proof}[Proof of Theorem \ref{INT-S1-big} (5)]
According to Theorem \ref{thDarType}, system (\ref{1ybig}) has a first integral of the Darboux type if and only if there are $\lambda_i$ and $\mu_j$, which are not all zero, satisfying equation (\ref{EqDarType}). By Theorems \ref{INT-S1-big} (2) and (4), we can consider the following cases;
\begin{enumerate}
\item If $a\ne 0$, then system (\ref{1ybig}) has no Darboux polynomials. By Theorem \ref{INT-S1-big} (4), there is only one exponential factor $e^x$ with cofactor $L=y$. Hence, the equation (\ref{EqDarType}) becomes $\mu_1y = 0$. This gives that $\mu_1= 0$.
\item If $a = 0$, then system (\ref{1ybig}) has one Darboux polynomial $f=y$ with cofactor $K=z$. By Theorem \ref{INT-S1-big} (4), there is only one exponential factor $e^x$ with cofactor $L=y$. Hence, the equation (\ref{EqDarType}) becomes $\lambda_1 z + \mu_1 y = 0$. This gives that $\lambda_1 = \mu_1 = 0$.
\end{enumerate}
This completes the proof of Theorem \ref{INT-S1-big} (5).
\end{proof}

\begin{proof}[Proof of Theorem \ref{THformal}]
	System (\ref{1ybig}) has a line of equilibrium points formed by $E(0, 0, z_0)$, where $z_0 \in \mathbb{R}$.
	The characteristic polynomial of the Jacobian matrix at $E(0,0, z_0)$ is given by $\lambda^3 - z_0 \lambda^2 + a \lambda = 0$.  Hence, the eigenvalues are	$\lambda_1=0,  \lambda_{2,3}=\frac{z_0 \pm \sqrt{z_0^2 -4a}}{2} $.

We consider following cases:
\begin{enumerate}
	\item If \textbf{$a>0$}, then $\lambda_2 \lambda_3 = a$ and $\lambda_2^2 = \left( \frac{z_0 + \sqrt{z_0^2 -4a}}{2}\right) ^2 >0$, for $z_0^2 \ge 4a$. Hence, $k_2 \lambda_2 + k_3 \lambda_3 = \frac{1}{\lambda_2}(k_2 \lambda_2^2 + k_3\lambda_2\lambda_3) \ne 0$, for all $k_2, k_3\in \mathbb{Z}\cup\{0\}$ with $k_2 + k_3 >0$. By Theorem \ref{Theoana1}, system (\ref{1ybig}) has a formal first integral in a neighborhood of $(0, 0, z_0)$ except the origin.
	
	On the other hand, if $z_0^2<4a$ i.e., $-2\sqrt{a}<z_0< 2\sqrt{a}$, then $\lambda_{2,3}=\frac{z_0\pm i\sqrt{4a-z_0^2}}{2}$. Hence, either all eigenvalues have positive real parts or all have negative real parts. By Theorem \ref{Theoana2}, system (\ref{1ybig}) has an analytic first integral in a neighborhood of the equilibrium points $(0, 0, z_0)$ for $z_0 \in (-2\sqrt{a}, 2\sqrt{a})\setminus \{0\}$.
		
	\item If \textbf{$a<0$}, then $k_2 \lambda_2 + k_3 \lambda_3 =(k_2 - k_3) \frac{\sqrt{z_0^2 -4a}}{2} +(k_2 +k_3) \frac{z_0}{2} \ne 0$, for all non-negative integers $k_2$ and $k_3$, such that $k_2+k_3 > 0$ and $z_0\ne 0$.
	Otherwise, if $(k_2 - k_3) \frac{\sqrt{z_0^2 -4a}}{2} +(k_2 +k_3) \frac{z_0}{2} = 0$, then $$\frac{k_3-k_2}{k_3+k_2}=\frac{z_0}{\sqrt{z_0^2-4a}}.$$ 
	Which is impossible, because $z_0\in \mathbb{R}\setminus \{0\}$ and $z_0^2-4a >0$, we can pick some value for $z_0$ in which the right side of the aforementioned equation becomes irrational. Consequently, by Theorem (\ref{Theoana1}) and the equilibrium points $(0,0,z_0)$ being non-isolated, then system (\ref{1ybig}) has a formal first integral in a neighborhood of $(0, 0, z_0)$ except the origin.
	\end{enumerate}
This conclude the proof of Theorem \ref{THformal}.	
\end{proof}

\section*{Acknowledgments}
We want to thank the referees for their insightful comments and helpful recommendations to enhance the way this work was presented.

\section*{Appendix: Averaging theory of first order for limit cycles}
Now we'll go through the basic averaging theory for Lipschitz differential systems  which we'll need to prove result of isolated limit cycle bifurcate from zero-Hopf point. The following theorem offers a first-order of the averaging theory for differential system which founded in [\cite{buicua2004averaging},\cite{buicua2008yu}] and used in  [\cite{kassa2021limit},\cite{llibre2016zero},\cite{diab2021zero}]. For more information and the proof see previous references.
\begin{theorem} \label{AvThe}
	Consider the differential equation
	\begin{equation}\label{Av1}
		\dot{\mathbf{x}}=\epsilon F_1(t,\mathbf{x})+\epsilon^2 F_2(t,\mathbf{x}) ,
	\end{equation}
	where $F_1:\mathbb{R} \times \Omega \to \mathbb{R}^n$, $F_2:\mathbb{R}\times\Omega \times \left(0,\epsilon_0 \right] \to \mathbb{R}^n $ are continuous functions and $2\pi$-periodic in $t$, where $\Omega \subseteq \mathbb{R}^n $ open. We set $F_{10}:\Omega \to \mathbb{R}^n$ and define
	\begin{equation}\label{Av2}
		F_{10}(\mathbf{x})=\frac{1}{2\pi}\int_{0}^{2\pi} F_1(t,\mathbf{x}) \, dt
	\end{equation}
	and assume that, 
	\begin{enumerate}
		\item $F_1$ and $F_2$ are locally Lipschitz in $\mathbf{x}$.
		\item Let $F_{10}$ be $C^1$ function, for $\mathbf{s}\in D$ with $F_{10}(\mathbf{s})=0$, If the determinant $\left( det(DF_{10}(\mathbf{s})) \ne 0\right) $ of the Jacobian matrix of function $F_{10}$ at $\mathbf{s}$ is not zero, then there exists a neighborhood $U$ of $\mathbf{s}$ such that $F_{10}(\mathbf{z})\ne 0, \forall \mathbf{z}\in V-\{s\}$ and Brouwer degree of $F_{10}$ in the neighborhood $V$, $d_B(F_{10},V,\mathbf{s},0)\in \{-1,1\}$.
	\end{enumerate}
	Then for $|\epsilon|> 0$ sufficiently small, there exists an isolated $2\pi-$periodic solution $\mathbf{x}(t, \epsilon)$ of system (\ref{Av1}) such that $\mathbf{x}(0, \epsilon) \to \mathbf{s}$ as $\epsilon \to 0$. Moreovere, if all eigenvalues of the matrix $DF_{10}(\mathbf{s})$ have negative real parts, then the limit cycle $\mathbf{x}(t, \epsilon)$ is stable. If
	some of the eigenvalue has positive real part the limit cycle $\mathbf{x}(t, \epsilon)$ is unstable.
\end{theorem}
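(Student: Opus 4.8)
The plan is to recast the search for $2\pi$-periodic solutions of (\ref{Av1}) as a root-finding problem for a displacement map and then apply topological degree theory. Let $x(t,z,\epsilon)$ denote the solution of (\ref{Av1}) with $x(0,z,\epsilon)=z$, which exists and is unique by the Lipschitz hypothesis, and define the displacement function $g(z,\epsilon)=x(2\pi,z,\epsilon)-z$. A point $z$ is a zero of $g(\cdot,\epsilon)$ precisely when $x(t,z,\epsilon)$ is $2\pi$-periodic. Since the vector field in (\ref{Av1}) is of order $\epsilon$, integrating the equation and applying Gronwall's inequality shows that $x(t,z,\epsilon)=z+O(\epsilon)$ uniformly on $[0,2\pi]$ for $z$ in a compact neighborhood of $\mathbf{s}$. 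Substituting this back into the integral form of (\ref{Av1}) and using the continuity of $F_1$ yields the expansion
\begin{equation*}
g(z,\epsilon)=\epsilon\int_0^{2\pi}F_1(t,z)\,dt+O(\epsilon^2)=2\pi\,\epsilon\,F_{10}(z)+O(\epsilon^2),
\end{equation*}
with $F_{10}$ the averaged function defined in (\ref{Av2}).

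Next I would factor out the leading power of $\epsilon$. Because $g(z,\epsilon)$ vanishes identically at $\epsilon=0$, write $h(z,\epsilon)=g(z,\epsilon)/\epsilon=2\pi F_{10}(z)+\epsilon\,R(z,\epsilon)$, where $R$ is continuous in $(z,\epsilon)$; for $\epsilon\neq0$ the zeros of $h(\cdot,\epsilon)$ coincide with those of $g(\cdot,\epsilon)$. At $\epsilon=0$ the map reduces to $h(\cdot,0)=2\pi F_{10}$, which has the nondegenerate zero $\mathbf{s}$ with $\det DF_{10}(\mathbf{s})\neq0$; hence the Brouwer degree satisfies $d_B(F_{10},V,0)=\operatorname{sign}\det DF_{10}(\mathbf{s})\in\{-1,1\}$ on a small closed neighborhood $V$ of $\mathbf{s}$ whose boundary carries no zero of $F_{10}$. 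The main obstacle is that $F_1$ and $F_2$ are only locally Lipschitz, so $h$ need not be $C^1$ in $z$ and the Implicit Function Theorem is unavailable; this is exactly why the hypotheses are phrased through the Brouwer degree. I would instead invoke homotopy invariance and continuity of the degree: for $|\epsilon|$ small enough $h(\cdot,\epsilon)$ stays nonzero on $\partial V$ and is homotopic to $2\pi F_{10}$, so $d_B(h(\cdot,\epsilon),V,0)=\pm1\neq0$, and nonvanishing of the degree forces a zero $z(\epsilon)\in V$. This produces the sought $2\pi$-periodic solution $x(t,z(\epsilon),\epsilon)$.

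To obtain $x(0,\epsilon)\to\mathbf{s}$ together with isolation, I would use that nondegeneracy makes $\mathbf{s}$ the unique zero of $F_{10}$ in sufficiently small neighborhoods; repeating the degree argument on a shrinking family of neighborhoods forces $z(\epsilon)\to\mathbf{s}$ as $\epsilon\to0$, and the same local uniqueness localizes the orbit, rendering it isolated.

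Finally, for the stability dichotomy I would analyze the linearized time-$2\pi$ (Poincaré) map at the fixed point $z(\epsilon)$. Differentiating the first variational equation of (\ref{Av1}) along the periodic solution and averaging gives $D_z x(2\pi,z(\epsilon),\epsilon)=I+2\pi\epsilon\,DF_{10}(\mathbf{s})+o(\epsilon)$, so the Floquet multipliers are $\mu_i=1+2\pi\epsilon\,\nu_i+o(\epsilon)$, where $\nu_1,\dots,\nu_n$ are the eigenvalues of $DF_{10}(\mathbf{s})$. For $\epsilon>0$ small, $|\mu_i|<1$ for every $i$ exactly when all $\operatorname{Re}\nu_i<0$, giving asymptotic stability, whereas a single $\operatorname{Re}\nu_i>0$ yields $|\mu_i|>1$ and hence instability. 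The delicate point here is again the reduced regularity: justifying the variational expansion relies on the assumed $C^1$ character of the averaged field $F_{10}$ to supply the leading-order linearization even though $F_1$ itself is merely Lipschitz.
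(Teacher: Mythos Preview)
The paper does not actually prove this theorem: it is stated in the appendix as a quoted result, with the sentence ``For more information and the proof see previous references'' pointing to \cite{buicua2004averaging}, \cite{buicua2008yu}, and related papers. So there is no in-paper proof to compare against.

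That said, your outline is essentially the standard argument carried out in those references: reduce periodic solutions to zeros of the displacement map, extract the leading $\epsilon$-term to recover $2\pi F_{10}$, and use homotopy invariance of the Brouwer degree (rather than the implicit function theorem, which the Lipschitz-only hypotheses preclude) to continue the nondegenerate zero $\mathbf{s}$ for small $\epsilon$; the stability clause then follows from the first-order expansion of the monodromy matrix. The places where you flag caution are the right ones. In particular, the variational expansion $D_z x(2\pi,z(\epsilon),\epsilon)=I+2\pi\epsilon\,DF_{10}(\mathbf{s})+o(\epsilon)$ cannot be obtained directly by differentiating (\ref{Av1}) when $F_1$ is merely Lipschitz; in the cited sources this step requires an additional smoothness assumption on $F_1$ (or a separate argument via generalized derivatives), and the stability addendum is typically stated under that extra regularity. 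Your sketch glosses over this point slightly, but otherwise matches the approach in the literature the paper defers to.
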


\bigskip
\bibliographystyle{apalike3}
\bibliography{bibliography.bib}

\begin{thebibliography}{}

\bibitem[B et~al., 2015]{munmuangsaen2015simple}
B, Munmuangsaen, B., Sprott, J.~C., Thio, W. J.-C., Buscarino, A., \& Fortuna,
  L. (2015).
\newblock A simple chaotic flow with a continuously adjustable attractor
  dimension.
\newblock {\em International Journal of Bifurcation and Chaos}, {\em 25}(12),
  1530036.

\bibitem[Barreira et~al., 2020]{barreira2020integrability}
Barreira, L., Llibre, J., \& Valls, C. (2020).
\newblock Integrability and zero-Hopf bifurcation in the Sprott A system.
\newblock {\em Bulletin des Sciences Math{\'e}matiques}, {\em 162}, 102874.

\bibitem[Barreira et~al., 2015]{barreira2015integrability}
Barreira, L., Valls, C., \& Llibre, J. (2015).
\newblock Integrability and limit cycles of the Moon--Rand system.
\newblock {\em International Journal of Non-Linear Mechanics}, {\em 69},
  129--136.

\bibitem[Bernardo et~al., 2008]{bernardo2008piecewise}
Bernardo, M., Budd, C., Champneys, A.~R., \& Kowalczyk, P. (2008).
\newblock {\em Piecewise-smooth dynamical systems: theory and applications},
  volume 163.
\newblock Springer Science \& Business Media.

\bibitem[Broucke et~al., 2001]{broucke2001structural}
Broucke, M.~E., Pugh, C., \& Simic, S.~N. (2001).
\newblock Structural stability of piecewise smooth systems.
\newblock {\em Computational and applied mathematics}, {\em 20}(1-2), 51--89.

\bibitem[Buic{\u{a}} \& Llibre, 2004]{buicua2004averaging}
Buic{\u{a}}, A. \& Llibre, J. (2004).
\newblock Averaging methods for finding periodic orbits via Brouwer degree.
\newblock {\em Bulletin des sciences mathematiques}, {\em 128}(1), 7--22.

\bibitem[Buic{\u{a}} et~al., 2008]{buicua2008yu}
Buic{\u{a}}, A., Llibre, J., \& Makarenkov, O.~Y. (2008).
\newblock On Yu. A. Mitropol’skii’s theorem on periodic solutions of
  systems of nonlinear differential equations with nondifferentiable right-hand
  sides.
\newblock In {\em Doklady Mathematics}, volume~78  (pp.\ 525--527).: SP MAIK
  Nauka/Interperiodica.

\bibitem[Christopher et~al., 2007]{christopher2007multiplicity}
Christopher, C., Llibre, J., \& Pereira, J.~V. (2007).
\newblock Multiplicity of invariant algebraic curves in polynomial vector
  fields.
\newblock {\em Pacific Journal of Mathematics}, {\em 229}(1), 63--117.

\bibitem[Christopher et~al., 2004]{christopher2004darboux}
Christopher, C., Llibre~Sal{\'o}, J., Pantazi, C., \& Zhang, X. (2004).
\newblock Darboux integrability and invariant algebraic curves for planar
  polynomial systems.

\bibitem[Diab et~al., 2021]{diab2021zero}
Diab, Z., Guirao, J.~L., \& Vera, J.~A. (2021).
\newblock Zero-Hopf Bifurcation in a Generalized Genesio Differential Equation.
\newblock {\em Mathematics}, {\em 9}(4), 354.

\bibitem[Dumortier et~al., 2006]{dumortier2006qualitative}
Dumortier, F., Llibre, J., \& Art{\'e}s, J.~C. (2006).
\newblock {\em Qualitative theory of planar differential systems}.
\newblock Springer.

\bibitem[Feckan \& Pospisil, 2016]{feckan2016poincare}
Feckan, M. \& Pospisil, M. (2016).
\newblock {\em Poincar{\'e}-Andronov-Melnikov Analysis for Non-Smooth Systems}.
\newblock Academic Press.

\bibitem[Jalal et~al., 2020]{jalal2020darboux}
Jalal, A.~A., Amen, A.~I., \& Sulaiman, N.~A. (2020).
\newblock Darboux integrability of the simple chaotic flow with a line
  equilibria differential system.
\newblock {\em Chaos, Solitons \& Fractals}, {\em 135}, 109712.

\bibitem[Kassa et~al., 2021]{kassa2021limit}
Kassa, S., Llibre, J., \& Makhlouf, A. (2021).
\newblock Limit cycles bifurcating from a zero-Hopf equilibrium of a
  3-dimensional continuous differential system.
\newblock {\em S{\~a}o Paulo Journal of Mathematical Sciences}, {\em 15}(1),
  419--426.

\bibitem[Li et~al., 2015]{li2015constructing}
Li, C., Sprott, J.~C., Yuan, Z., \& Li, H. (2015).
\newblock Constructing chaotic systems with total amplitude control.
\newblock {\em International Journal of Bifurcation and Chaos}, {\em 25}(10),
  1530025.

\bibitem[Llibre \& Makhlouf, 2016]{llibre2016zero}
Llibre, J. \& Makhlouf, A. (2016).
\newblock Zero-Hopf bifurcation in the generalized Michelson system.
\newblock {\em Chaos, Solitons \& Fractals}, {\em 89}, 228--231.

\bibitem[Llibre \& Teixeira, 2018]{llibre2018periodic}
Llibre, J. \& Teixeira, M.~A. (2018).
\newblock Periodic orbits of continuous and discontinuous piecewise linear
  differential systems via first integrals.
\newblock {\em Sao Paulo Journal of Mathematical Sciences}, {\em 12}(1),
  121--135.

\bibitem[Llibre et~al., 2020]{llibre2020crossing}
Llibre, J., Tonon, D.~J., \& Velter, M.~Q. (2020).
\newblock Crossing periodic orbits via first integrals.
\newblock {\em International Journal of Bifurcation and Chaos}, {\em 30}(11),
  2050163.

\bibitem[Llibre \& Valls, 2012]{llibre2012integrability}
Llibre, J. \& Valls, C. (2012).
\newblock On the integrability of a Muthuswamy--Chua system.
\newblock {\em Journal of Nonlinear Mathematical Physics}, {\em 19}(4),
  477--488.

\bibitem[Llibre \& Valls, 2015]{llibre2015darboux}
Llibre, J. \& Valls, C. (2015).
\newblock On the Darboux integrability of the Painlev{\'e} II equations.
\newblock {\em Journal of Nonlinear Mathematical Physics}, {\em 22}(1), 60--75.

\bibitem[Llibre \& Zhang, 2002]{llibre2002invariant}
Llibre, J. \& Zhang, X. (2002).
\newblock Invariant algebraic surfaces of the Lorenz system.
\newblock {\em Journal of Mathematical Physics}, {\em 43}(3), 1622--1645.

\bibitem[Llibre \& Zhang, 2009]{llibre2009darbouxa}
Llibre, J. \& Zhang, X. (2009).
\newblock Darboux theory of integrability for polynomial vector fields in Rn
  taking into account the multiplicity at infinity.
\newblock {\em Bulletin des sciences mathematiques}, {\em 133}(7), 765--778.

\bibitem[Llibre \& Zhang, 2012]{llibre2012darboux}
Llibre, J. \& Zhang, X. (2012).
\newblock On the Darboux integrability of polynomial differential systems.
\newblock {\em Qualitative theory of dynamical systems}, {\em 11}(1), 129--144.

\bibitem[Pham et~al., 2016]{pham2016gallery}
Pham, V.-T., Jafari, S., Volos, C., \& Kapitaniak, T. (2016).
\newblock A gallery of chaotic systems with an infinite number of equilibrium
  points.
\newblock {\em Chaos, Solitons \& Fractals}, {\em 93}, 58--63.

\bibitem[Pham et~al., 2019]{pham2019chaotic}
Pham, V.-T., Volos, C., Jafari, S., Ouannas, A., \& Dao, T.~T. (2019).
\newblock Chaotic behaviors in a system with a line equilibrium.
\newblock In {\em 2019 18th European Control Conference (ECC)}  (pp.\
  2603--2607).: IEEE.

\bibitem[Simpson, 2010]{simpson2010bifurcations}
Simpson, D. J.~W. (2010).
\newblock {\em Bifurcations in piecewise-smooth continuous systems}, volume~70.
\newblock World Scientific.

\bibitem[Zhang, 2003]{zhang2003local}
Zhang, X. (2003).
\newblock Local first integrals for systems of differential equations.
\newblock {\em Journal of Physics A: Mathematical and General}, {\em 36}(49),
  12243.

\bibitem[Zhang, 2017]{zhang2017note}
Zhang, X. (2017).
\newblock A note on local integrability of differential systems.
\newblock {\em Journal of Differential Equations}, {\em 263}(11), 7309--7321.

\end{thebibliography}

\end{document}